\begin{document}

\title{Locally Lipschitz vector optimization problems: second-order constraint qualifications, regularity condition and KKT necessary optimality conditions
}
\titlerunning{Locally Lipschitz vector optimization problems}
\author{Yi-Bin Xiao \and Nguyen Van Tuyen  \and Jen-Chih Yao \and Ching-Feng Wen
}
\authorrunning{Y.-B. Xiao, N.V. Tuyen, J.-C. Yao and C.-F. Wen}

\institute{
Yi-Bin Xiao \at School of Mathematical Sciences, University of Electronic Science and Technology of China, Chengdu, P.R. China,\\
\email{xiaoyb9999@hotmail.com}
\smallskip
           \and
Nguyen Van Tuyen \at School of Mathematical Sciences, University of Electronic Science and Technology of China, Chengdu, P.R. China; Department of Mathematics, Hanoi Pedagogical University 2, Xuan Hoa, Phuc Yen, Vinh Phuc, Vietnam\\
              \email{tuyensp2@yahoo.com; nguyenvantuyen83@hpu2.edu.vn}
              \smallskip
           \and
           Jen-Chih Yao \at
           Center for General Education, China Medical University, Taichung, 40402, Taiwan
           \\
           \email{yaojc@mail.cmu.edu.tw}
           \smallskip
              \and
           Ching-Feng Wen \at
           Corresponding author. Center for Fundamental Science; and Research Center for Nonlinear Analysis and Optimization, Kaohsiung Medical University, Kaohsiung, 80708, Taiwan; Department of Medical Research, Kaohsiung Medical University Hospital,
           Kaohsiung, 80708, Taiwan
           \\
           \email{cfwen@kmu.edu.tw}
           }

\date{Received: date / Accepted: date}

\maketitle

\begin{abstract}
{In the present paper, we are concerned with a class of constrained vector optimization problems, where the objective functions and active constraint functions are locally Lipschitz at the referee point. Some second-order constraint qualifications of Zangwill type, Abadie type and Mangasarian~--~Fromovitz type as well  as a  regularity condition of Abadie type are proposed in a nonsmooth setting. The connections between these proposed conditions are established. They are applied to develop second-order Karush--Kuhn--Tucker necessary optimality conditions for local (weak, Geoffrion properly) efficient  solutions to the considered problem. Examples are also given to illustrate the obtained results.}

\keywords{Locally Lipschitz vector optimization\and Second-order constraint qualification \and Abadie second-order  regularity condition \and Second-order KKT necessary optimality  conditions}
 \subclass{49K30 \and 49J52 \and 49J53 \and 90C29 \and 90C46}
\end{abstract}

\section{Introduction}
\label{intro}
In this paper, we are interested in second-order optimality conditions for the following constrained vector optimization  problem
\begin{align*}
& \text{min}\, f(x)\label{problem} \tag{VP}
\\
&\text{subject to}\ \ x\in Q_0:=\{x\in X\,:\, g(x)\leqq 0\},
\end{align*}
where $f:=(f_i)$, $i\in I:=\{1, \ldots, p\}$, and $g:=(g_j)$, $j\in J:=\{1, \ldots, m\}$  are vector-valued functions defined on a Banach space $X$.

{As a mainstream in the study of vector optimization problems, optimality condition for vector optimization problems has attracted the attention of many researchers in the field of optimization due to their important applications in many disciplines, such as variational inequalities, equilibrium problems and fixed pointed problems; see, for example, \cite{Lee98,Lu18,mor06,Petrusel18,Wang16,XS,SXC1,SXC2,Qin}.} 

It is well-known that if $f_i$, $g_j$ are differentiable at $\bar x\in Q_0$  and $\bar x$ is a local weak efficient solution of \eqref{problem}, then
there exist Lagrange multipliers $(\lambda, \mu)\in \mathbb{R}^p\times\mathbb{R}^m$ satisfying
\begin{align}
&\sum_{i=1}^p\lambda_i\nabla f_i(\bar x)+\sum_{j=1}^m\mu_j\nabla g_j(\bar x)=0,\label{equa_intro:1}
\\
&\mu=(\mu_1, \ldots, \mu_m)\geqq 0, \mu_jg_j(\bar x)=0,\label{equa_intro:2}
\\
&\lambda=(\lambda_1, \ldots, \lambda_p)\geqq 0, (\lambda, \mu)\neq 0;\label{equa_intro:3}
\end{align}
see \cite[Theorem 7.4]{Jahn04}. Conditions \eqref{equa_intro:1}--\eqref{equa_intro:3} are called the first-order F.-John necessary optimality conditions. If $\lambda$ is nonzero, then these conditions are called the first-order Karush--Kuhn--Tucker $(KKT)$  optimality  conditions.  By Motzkin's theorem of the alternative \cite[p.28]{Mangasarian69}, the existence of $KKT$ multipliers is equivalent to the inconsistency of the following system
\begin{align}
\nabla f_i(\bar x)(v)&<0, \ \ \ i\in I, \label{equa_intro:4}
\\
\nabla g_j(\bar x)(v)&\leqq 0, \ \ \ j\in J(\bar x),\label{equa_intro:5}
\end{align}
with unknown $v\in X$, where $J(\bar x)$ is the  active index set  at $\bar x$.  Conditions \eqref{equa_intro:4}--\eqref{equa_intro:5} are called 
the first-order $KKT$ necessary conditions in primal form.

The first-order $KKT$ optimality conditions are needed to find optimal solutions of constrained optimization problems. In order to obtain these optimality conditions,   constraint qualifications and regularity conditions are indispensable; see, for example, \cite{Andreani11,Tuyen18,Tung-Luu,Luu-Mai,TungLT,Tuyen-Xiao-Son,Movahedian,Soleimani,Gunther}. We recall here that these assumptions are called constraint qualifications $(CQ)$ when they have to be fulfilled by the constraints of the problem, and they are called regularity conditions $(RC)$ when they have to be fulfilled by both the objectives and the constraints of the problem; see \cite{Rizvi12} for more details.

Second-order necessary optimality conditions play an important role in both
the theory and practice of constrained optimization problems. These conditions are used to eliminate nonoptimal KKT points of optimization problems. Moreover, the second-order  optimality condition is  a key tool of numerical analysis in proving convergence and deriving error
estimates for numerical discretizations of optimization problems; see, for example, \cite{Bertsekas99,Izmailov08,Nocedal99}. 

One of the first investigations to obtain second-order optimality conditions of $KKT$-type for smooth vector optimization problems was carried out by Wang \cite{Wang91}. Then, by introducing a new second-order constraint qualification in the sense of Abadie, Aghezzaf et al.  \cite{Aghezzaf99} extended Wang's results to the nonconvex case. Maeda \cite{Maeda04} was the first to propose an Abadie regularity condition and established second-order $KKT$ necessary optimality conditions for $C^{1,1}$ vector optimization
problems. By using the second-order directional derivatives and introducing a new second-order constraint qualification of Zangwill-type, Ivanov \cite{Ivanov15} introduced some optimality conditions for $C^1$ vector optimization problems with inequality constraints.  Very recently, by proposing some types of the second-order Abadie regularity conditions, Huy et al. \cite{Huy162,Huy163} have obtained some second-order $KKT$ necessary optimality conditions for $C^{1,1}$ vector optimization problems in terms of second-order symmetric subdifferentials. For other contributions to second-order $KKT$ optimality conditions for vector optimization, the reader is invited to see the papers \cite{Elena,Ginchev08,Giorgi09,Ivanov152,Ivanov10,Luu17,Kim-Tuyen,Huy-Tuyen} with the references therein.

Our aim is to weaken the hypotheses of the optimality conditions in \cite{Aghezzaf99,Elena,Huy163,Ivanov15,Luu17,Maeda04,Wang91}. To obtain second-order $KKT$ necessary conditions, by using second-order upper generalized directional derivatives and second-order tangent sets, we introduce some second-order constraint qualifications of Zangwill type, Abadie type and Mangasarian-Fromovitz type as well as a  regularity condition of Abadie type.
Our obtained results  improve and generalize the corresponding results in \cite{Aghezzaf99,Elena,Huy163,Ivanov15,Luu17,Maeda04,Wang91}, because the objective functions and the active constraint functions are only locally Lipschitz at the referee point and the required constraint qualifications are also weaker. Moreover, the connections between these proposed conditions are established.

The organization of the paper is as follows. In Section \ref{Preliminaries}, we recall some notations, definitions and preliminary material. Section \ref{Abadie_RC_sect} is devoted to investigate  second-order constraint qualifications and regularity conditions in a nonsmooth setting for vector optimization problems. In Section \ref{Second_order_optim_sect} and Section~\ref{Strong_Second_order_optim_sect}, we establish some second-order necessary optimality conditions of $KKT$-type  for a local  (weak, Geoffrion properly) efficient solution of \eqref{problem}. Section \ref{conclusions_sect}  draws some conclusions.

\section{Preliminaries}
\label{Preliminaries}

In this section, we recall some definitions and introduce basic results, which are useful in our study.

Let $\mathbb{R}^p$ be the $p$-dimensional Euclidean space. For $a, b\in\mathbb{R}^p$, by $a\leqq b$, we mean $a_i\leqq b_i$ for all $i\in I$; by $a\leq b$, we mean $a\leqq b$ and $a\neq b$; and by $a<b$, we mean $a_i<b_i$ for all $i\in I$.

We first recall the definition of local (weak, Geoffrion properly) efficient solutions for the considered problem \eqref{problem}. Note that the concept of properly efficient solution has been introduced at first to eliminate the efficient solutions with unbounded trade-offs. This concept was introduced initially by Kuhn and Tucker \cite{Kuhn50} and was followed thereafter by Geoffrion \cite{Geoffrion68}. Geoffrion's concept enjoys economical interpretations, while Kuhn and Tucker's one is useful for numerical and algorithmic purposes.

\begin{definition}{\rm Let $Q_0$ be the feasible set of \eqref{problem} and $\bar x\in Q_0$. We say that:
\begin{enumerate}[(i)]
\item  $\bar x$ is   {\em an  efficient solution} (resp., {\em a weak efficient solution}) of \eqref{problem}  iff there is no $x\in Q_0$ satisfying  $f(x)\leq f(\bar x)$ (resp., $f(x)<f(\bar x)$).
\item $\bar x$ is a {\em Geoffrion properly efficient solution} of \eqref{problem} iff it is efficient and there exists $M>0$ and  such that, for each $i$,
$$\frac{f_i(x)-f_i(\bar x)}{f_j(\bar x)-f_j(x)}\leqq M,$$
for some $j$ such that $f_j(\bar x)<f_j(x)$ whenever $x\in Q_0$ and $f_i(\bar x)>f_i(x)$.
\item   $\bar x$ is a {\em local efficient solution} (resp., {\em local weak efficient solution, local Geoffrion properly efficient solution}) of \eqref{problem} iff it is an efficient solution (resp., weak efficient solution, Geoffrion properly efficient solution) in $U\cap Q_0$, where $U$ is some neighborhood of $\bar x$.
\end{enumerate}
	}
\end{definition}

Hereafter, we assume that $X$ is a Banach space  equipped with the norm $\|\cdot\|$. Let $\Omega$ be a nonempty subset in $X$. The  {\it closure}, {\it convex hull} and {\it conic hull} of $\Omega$ are denoted by $\mbox{cl}\,\Omega$,
$\mbox{conv}\,\Omega$ and $\mbox{cone}\,\Omega$, respectively.

\begin{definition}{\rm Let   $\bar x\in \Omega$ and $u\in X$.
\begin{enumerate}[(i)]
\item  The {\em tangent cone} to $\Omega$ at $\bar x\in \Omega$ is defined by
$$T(\Omega; \bar x):=\{d\in X\,:\,\exists t_k\downarrow 0, \exists d^k\to d, \bar x+t_kd^k\in \Omega, \ \ \forall k\in \mathbb{N}\}.$$
\item   The {\em second-order tangent set} to $\Omega$ at $\bar x$ with respect to the direction $u$ is defined by
$$T^2(\Omega; \bar x, u):=\left\{v\in X:\exists t_k\downarrow 0, \exists v^k\to v, \bar x+t_ku+\frac12t_k^2v^k\in \Omega,\ \ \forall k\in \mathbb{N}\right\}.$$
\end{enumerate}
	}	
\end{definition}

Clearly, $T(\,\cdot\,; \bar x)$ and $T^2(\,\cdot\,; \bar x, u)$ are isotone, i.e., if $\Omega^1\subset \Omega^2$, then
\begin{align*}
T(\Omega^1; \bar x)&\subset T(\Omega^2; \bar x),
\\
T^2(\Omega^1; \bar x, u)&\subset  T^2(\Omega^2; \bar x, u).
\end{align*}

It is well-known that $T(\Omega; \bar x)$ is a nonempty closed cone. For each $u\in X$, the set $T^2(\Omega; \bar x, u)$ is closed, but may be empty. However, we see that the set $T^2(\Omega; \bar x, 0)=T(\Omega; \bar x)$ is always nonempty.

Let $F\colon X\to\mathbb{R}$ be a real-valued function defined on $X$ and $\bar x\in X$. The function $F$ is said to be {\em locally Lipschitz} at $\bar x$ iff there exist a neighborhood $U$ of $\bar x$ and $L\geqq 0$ such that
\begin{equation*}
|F(x)-F(y)|\leqq L\|x-y\|,\ \ \ \forall x, y\in U.
\end{equation*}
\begin{definition}{\rm Assume that $F\colon X\to\mathbb{R}$ is locally Lipschitz at $\bar x\in X$. Then:
\begin{enumerate}[(i)]
\item  (See \cite{Clarke83}) The {\em Clarke's generalized derivative} of $F$ at $\bar x$ is defined by
\begin{equation*}
F^{\circ} (\bar x, u):= \limsup\limits_{\mathop {x \to \bar x}\limits_{t  \downarrow 0} } \dfrac{F(x+tu)-F(x)}{t}, \ \ \ u\in X.
\end{equation*}
\item   (See \cite{Pales94}) The {\em second-order upper generalized directional derivative} of $F$ at $\bar x$ is defined by
\begin{equation*}
F^{\circ\circ} (\bar x, u):= \limsup\limits_{\mathop {t  \downarrow 0} }\dfrac{F(\bar x+tu)-F(\bar x)-tF^{\circ} (\bar x, u)}{\frac12t^2}, \ \ \ u\in X.
\end{equation*}
\end{enumerate}
	}
\end{definition}
It is easily seen that $F^{\circ} (\bar x, 0)=0$ and $F^{\circ\circ} (\bar x, 0)=0$. Furthermore, the function $u\mapsto F^\circ(\bar x, u)$ is finite, positively homogeneous, and subadditive on $X$; see, for example,  \cite{Clarke83,SX2,XS3}.

The following lemmas will be useful in our study.
\begin{lemma}\label{lemma1} Suppose that $F\colon X\to\mathbb{R}$ is locally Lipschitz at $\bar x\in X$. Let $u\in X$ and let $\{(t_k, u^k)\}$ be a sequence converging to $(0^+, u)$. If
\begin{equation*}
F \left(\bar x+t_ku^k\right)\geqq F(\bar x) \ \ \mbox{for all} \ \ k\in\mathbb{N},
\end{equation*}
then $F^{\circ} (\bar x, u)\geqq 0.$	
\end{lemma}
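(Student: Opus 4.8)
The plan is to use the characterization of Clarke's generalized derivative as a limit superior taken over \emph{both} the base point and the step size, and to produce a sequence of admissible base points tending to $\bar x$ along which the difference quotients entering $F^\circ(\bar x, u)$ are asymptotically nonnegative. The main obstacle is a mismatch of roles between the hypothesis and the conclusion: the assumption controls $F(\bar x + t_k u^k) - F(\bar x)$, where the base point is pinned at $\bar x$ while the direction $u^k$ varies, whereas $F^\circ(\bar x, u)$ is assembled from quotients $\tfrac{F(x + tu) - F(x)}{t}$ in which the direction $u$ is fixed but the base point $x$ moves. The Lipschitz hypothesis is precisely what will let me absorb the discrepancy between $u^k$ and $u$.

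First I would introduce the shifted base points
\[
x_k := \bar x + t_k\,(u^k - u), \qquad k \in \mathbb{N}.
\]
Because $t_k \downarrow 0$ and $u^k \to u$, we have $\|x_k - \bar x\| = t_k\|u^k - u\| \to 0$, so $x_k \to \bar x$ and in particular $(x_k, t_k) \to (\bar x, 0^+)$. The motivation for this choice is the elementary identity
\[
x_k + t_k u = \bar x + t_k(u^k - u) + t_k u = \bar x + t_k u^k,
\]
which rewrites the fixed-direction quotient based at $x_k$ as exactly the quantity appearing in the hypothesis.

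Next I would estimate the quotient $\tfrac{F(x_k + t_k u) - F(x_k)}{t_k}$. For $k$ large enough that $x_k$ lies in the neighborhood $U$ on which $F$ is $L$-Lipschitz, the identity above together with the assumption $F(\bar x + t_k u^k) \geqq F(\bar x)$ and the bound $|F(x_k) - F(\bar x)| \leqq L\|x_k - \bar x\| = L\,t_k\|u^k - u\|$ give
\[
\frac{F(x_k + t_k u) - F(x_k)}{t_k} = \frac{F(\bar x + t_k u^k) - F(x_k)}{t_k} \geqq \frac{F(\bar x) - F(x_k)}{t_k} \geqq -L\,\|u^k - u\|.
\]

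Finally, since the pairs $(x_k, t_k)$ are admissible in the limit superior defining $F^\circ(\bar x, u)$, I conclude
\[
F^\circ(\bar x, u) \geqq \limsup_{k\to\infty} \frac{F(x_k + t_k u) - F(x_k)}{t_k} \geqq \limsup_{k\to\infty}\bigl(-L\,\|u^k - u\|\bigr) = 0,
\]
the last equality holding because $u^k \to u$. This yields the desired inequality $F^\circ(\bar x, u) \geqq 0$. The only delicate point is the construction of $x_k$; once that substitution is in place, everything reduces to the Lipschitz estimate and the definition of the $\limsup$.
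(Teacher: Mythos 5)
Your proof is correct, and it is a mirror image of the paper's argument rather than a reproduction of it: both hinge on the Lipschitz constant absorbing an error of size $\|u^k-u\|$, but the decompositions differ. The paper keeps the base point pinned at $\bar x$ and corrects the \emph{endpoint}, splitting
\[
F(\bar x+t_ku^k)-F(\bar x)=\bigl[F(\bar x+t_ku^k)-F(\bar x+t_ku)\bigr]+\bigl[F(\bar x+t_ku)-F(\bar x)\bigr],
\]
bounding the first bracket by $Lt_k\|u^k-u\|$ and feeding the second, divided by $t_k$, into the limsup defining $F^{\circ}(\bar x,u)$ along the constant sequence $x\equiv\bar x$. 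You instead keep the endpoint pinned at $\bar x+t_ku^k$ and correct the \emph{base point}, re-basing the quotient at $x_k:=\bar x+t_k(u^k-u)$ so that $x_k+t_ku=\bar x+t_ku^k$ holds exactly, with the same Lipschitz term now absorbing $F(\bar x)-F(x_k)$. The costs are identical, but the by-products differ slightly. Because the paper's version uses only quotients based at $\bar x$ itself, it actually establishes the stronger fact that the upper Dini-type quantity $\limsup_{t\downarrow 0}\,t^{-1}\bigl(F(\bar x+tu)-F(\bar x)\bigr)$ is nonnegative, of which $F^{\circ}(\bar x,u)\geqq 0$ is a consequence. Your version genuinely needs the moving base point $x\to\bar x$ permitted in Clarke's definition --- exactly the feature you identified as resolving the mismatch of roles --- so it yields only the stated inequality, but it makes transparent why the generalized derivative, rather than a derivative computed solely at $\bar x$, is the natural object, and it would still go through under the formally weaker hypothesis $F(\bar x+t_ku^k)\geqq F(x_k)$.
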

\begin{proof} Since $F$ is  locally Lipschitz at $\bar x$ and $$\lim\limits_{k\to\infty} (\bar x+t_ku^k)=\lim\limits_{k\to\infty} (\bar x+t_ku)=\bar x,$$
there exist $L\geqq0$ and $k_0\in\mathbb{N}$ such that
$$|F(\bar x+t_ku^k)-F(\bar x+t_ku)|\leqq L t_k\|u^k-u\|\ \ \text{for all}\ \ k\geqq k_0.$$
Thus,
\begin{align*}
0&\leqq F(\bar x+t_ku^k) -F(\bar x)
\\
&= [F(\bar x+t_ku^k)-F(\bar x+t_ku)]+[F(\bar x+t_ku)-F(\bar x)]
\\
&\leqq Lt_k\|u^k-u\| +F(\bar x+t_ku)-F(\bar x)
\end{align*}
for all $k\geqq k_0$. This implies that
\begin{align*}
0&\leqq \lim_{k\to \infty} L \|u^k-u\|+\limsup_{k\to \infty} \dfrac{F(\bar x+t_ku)-F(\bar x)}{t_k}
\\
&\leqq \limsup\limits_{\mathop {x \to \bar x}\limits_{t  \downarrow 0} } \dfrac{F(x+tu)-F(x)}{t}.
\end{align*}
Therefore, $F^{\circ} (\bar x, u)\geqq 0$, as required.
\end{proof}
\begin{lemma}\label{lemma2} Suppose that $F\colon X\to\mathbb{R}$ is locally Lipschitz at $\bar x\in X$. Let $(u, v)$ be a vector in $X\times X$ and let $\{(t_k, v^k)\}$ be a sequence converging to $(0^+, v)$  satisfying
\begin{equation*}
F \left(\bar x+t_ku+\frac12 t^2_kv^k\right)\geqq F(\bar x) \ \ \mbox{for all} \ \ k\in\mathbb{N}.
\end{equation*}
If $F^{\circ} (\bar x, u)= 0$, then $F^{\circ} (\bar x, v)+F^{\circ\circ} (\bar x, u)\geqq 0.$	
\end{lemma}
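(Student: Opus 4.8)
The plan is to mimic the telescoping estimate used for Lemma~\ref{lemma1}, but carried out to second order. First I would divide the hypothesis inequality by $\tfrac12 t_k^2>0$, so that for every $k$
\[
0\leqq \frac{F\big(\bar x+t_ku+\tfrac12 t_k^2v^k\big)-F(\bar x)}{\tfrac12 t_k^2},
\]
and then decompose the numerator on the right through the intermediate point $\bar x+t_ku$ as
\begin{align*}
F\big(\bar x+t_ku+\tfrac12 t_k^2v^k\big)-F(\bar x)
&=\Big[F\big(\bar x+t_ku+\tfrac12 t_k^2v^k\big)-F(\bar x+t_ku)\Big]\\
&\quad+\big[F(\bar x+t_ku)-F(\bar x)\big].
\end{align*}
The strategy is to show, after dividing each bracket by $\tfrac12 t_k^2$, that the $\limsup$ of the first one is bounded above by $F^{\circ}(\bar x,v)$ and that of the second by $F^{\circ\circ}(\bar x,u)$.

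For the first bracket I would set $x_k:=\bar x+t_ku\to\bar x$ and $s_k:=\tfrac12 t_k^2\downarrow 0$, so that the bracket equals $F(x_k+s_kv^k)-F(x_k)$. Exactly as in Lemma~\ref{lemma1}, I would use the local Lipschitz property to replace $v^k$ by its limit $v$: writing $F(x_k+s_kv^k)-F(x_k)=[F(x_k+s_kv^k)-F(x_k+s_kv)]+[F(x_k+s_kv)-F(x_k)]$ and dividing by $s_k$, the first difference is bounded by $L\|v^k-v\|\to 0$, while the second is of the form appearing in the definition of $F^{\circ}(\bar x,v)$ along the admissible sequence $(x_k,s_k)\to(\bar x,0^+)$; hence its $\limsup$ does not exceed $F^{\circ}(\bar x,v)$. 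For the second bracket I would invoke the hypothesis $F^{\circ}(\bar x,u)=0$ to rewrite $F(\bar x+t_ku)-F(\bar x)=F(\bar x+t_ku)-F(\bar x)-t_kF^{\circ}(\bar x,u)$; dividing by $\tfrac12 t_k^2$ and taking the $\limsup$ over $t_k\downarrow 0$ then yields, directly from the definition of the second-order upper generalized directional derivative, a bound by $F^{\circ\circ}(\bar x,u)$.

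Finally I would recombine the two estimates. Since the sum of the two normalized brackets is $\geqq 0$ for every $k$, its $\limsup$ is $\geqq 0$; applying the subadditivity of $\limsup$ then gives $0\leqq F^{\circ}(\bar x,v)+F^{\circ\circ}(\bar x,u)$, which is the claim. I expect the main delicate point to be precisely this last step: passing to the $\limsup$ of a sum must be justified so as to avoid an indeterminate $\infty-\infty$, and this is exactly where the boundedness of the first ratio is used — the Lipschitz estimate makes $\tfrac{F(x_k+s_kv^k)-F(x_k)}{s_k}$ bounded by $L\|v^k\|$, hence its $\limsup$ finite, so that even if $F^{\circ\circ}(\bar x,u)=+\infty$ the inequality still holds harmlessly. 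The replacement of $v^k$ by $v$ via the Lipschitz constant is the other point needing care, but it is entirely parallel to the argument already carried out in Lemma~\ref{lemma1}.
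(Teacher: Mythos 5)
Your proposal is correct and is essentially the paper's own proof: the paper uses the identical three-term decomposition through the points $\bar x+t_ku+\tfrac12 t_k^2v$ and $\bar x+t_ku$ (insertion of $-t_kF^{\circ}(\bar x,u)=0$ included), the same Lipschitz estimate to replace $v^k$ by $v$, and the same identification of the two normalized brackets with $F^{\circ}(\bar x,v)$ and $F^{\circ\circ}(\bar x,u)$. The finiteness of $F^{\circ}(\bar x,v)$, which you correctly flag as what makes the splitting of the $\limsup$ legitimate, is also implicit in the paper's argument.
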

\begin{proof}  For each $k\in \mathbb{N}$, put $x^k:= \bar x+t_ku+\frac12 t^2_kv^k$ and $y^k:=\bar x+t_ku+\frac12 t^2_kv$. Since $F$ is  locally Lipschitz at $\bar x$ and
$$\lim\limits_{k\to\infty} x^k=\lim\limits_{k\to\infty} y^k=\bar x,$$
there exist $L\geqq 0$ and $k_0\in\mathbb{N}$ such that
\begin{equation*}
|F(x^k)-F\left(y^k\right)|\leqq \frac12t^2_kL\|v^k-v\|\ \ \text{for all}\ \ k\geqq k_0.
\end{equation*}
Thus,
\begin{align*}
0&\leqq F(x^k) - F(\bar x)
\\
&= [F(x^k)-F(y^k)]+[F(y^k) - F(\bar x+t_ku)]
\\
&+  [F(\bar x+t_ku)-F(\bar x)-t_kF^{\circ}(\bar x, u)]
\\
&\leqq \frac12t^2_kL\|v^k-v\|+[F(y^k) - F(\bar x+t_ku)] +  [F(\bar x+t_ku)-F(\bar x)-t_kF^{\circ}(\bar x, u)]
\end{align*}
for all $k\geqq k_0$. This implies that
\begin{align*}
0&\leqq \lim\limits_{k\to\infty} L\|v^k-v\| + \limsup\limits_{k\to\infty}\dfrac{F(\bar x+t_ku+\frac12 t^2_kv) - F(\bar x+t_ku)}{\frac12t^2_k}
\\
&+\limsup\limits_{k\to\infty}\dfrac{F(\bar x+t_ku)-F(\bar x)-t_kF^{\circ}(\bar x, u)}{\frac12t^2_k}
\\
&\leqq\limsup\limits_{\mathop {x \to \bar x}\limits_{t  \downarrow 0} }\dfrac{F(x+tv) - F(x)}{t} + \limsup\limits_{t\downarrow 0}\dfrac{F(\bar x+tu)-F(\bar x)-tF^{\circ}(\bar x, u)}{\frac12t^2}
\\
&= F^{\circ}(\bar x, v) + F^{\circ\circ} (\bar x, u).
\end{align*}
Therefore, $F^{\circ}(\bar x, v) + F^{\circ\circ} (\bar x, u)\geqq 0$. The proof is complete.
 \end{proof}
\section{Second-order constraint qualification and regularity condition}          
\label{Abadie_RC_sect}
From now on, we consider problem \eqref{problem} under the following assumptions:
\begin{equation*}
\begin{cases}
\text{The functions}\ \ f_i, i\in I, g_j, j\in J(\bar x), \ \ \text{are locally Lipschitz  at} \ \ \bar x,
\\
\text{The functions}\ \  g_j, j\in J\setminus J(\bar x),\ \ \text{are continuous at}\ \  \bar x,
\end{cases}
\end{equation*}
where $\bar x$ is a feasible point of \eqref{problem} and $J(\bar x)$ is the {\em active index set} at $\bar x$, that is,
$$J(\bar x):=\{j\in J\,:\,g_j(\bar x)=0\}.$$
For any vectors $a=(a_1, a_2)$ and $b=(b_1, b_2)$ in $\mathbb{R}^2$, we denote the lexicographic order by
\begin{align*}
a&\leqq_{\rm lex} b,\ \  {\rm iff} \ \ a_1<b_1\ \   {\rm or} \ \  (a_1=b_1\ \ {\rm and }\ \   a_2\leqq b_2),
\\
a&<_{\rm lex} b,\ \  {\rm iff} \ \ a_1<b_1\ \   {\rm or} \ \  (a_1=b_1\ \ {\rm and }\ \   a_2< b_2).
\end{align*}

Let us introduce some notations which are used in the sequel. For each $\bar x\in Q_0$ and $u\in X$, put
\begin{align*}
&Q:=Q_0\cap \{x\in X\,:\, f_i(x)\leqq f_i(\bar x), \ \ i\in I\},
\\
&J(\bar x; u):=\{j\in J(\bar x)\,:\, g_j^{\circ}(\bar{x}, u)=0\},
\\
&I(\bar x; u):=\{i\in I\,:\,f_i^{\circ}(\bar{x}, u)=0\}.
\end{align*}
We say that $u$ is a {\em critical direction} of \eqref{problem}  at $\bar x$   iff
\begin{align*}
f_i^{\circ}(\bar{x}, u)&\leqq 0, \ \ \ \forall i\in I,
\\
f_i^{\circ}(\bar{x}, u)&= 0, \ \ \ \mbox{at least one} \ \ i\in I,
\\
g_j^{\circ}(\bar{x}, u)&\leqq 0, \ \ \ \forall j\in J(\bar x).
\end{align*}
The set of all critical directions of \eqref{problem} at $\bar x$ is denoted by $\mathcal{C}(\bar x)$. Obviously, $0\in\mathcal{C}(\bar x)$.

We now use the following second-order approximation sets for $Q$ and $Q_0$ to introduce second-order constraint qualifications and regularity condition.  For each $\bar x\in Q_0$ and $u\in X$, set
\begin{eqnarray*}
	L^2(Q; \bar x, u)&&:=\bigg\{v\in X\, :\, F^2_i(\bar{x}; u, v)\leqq_{\rm lex} (0, 0), \ \ i\in I
	\\
	&& \qquad\ \ \,\,\text{and} \ \ G^2_j(\bar{x}; u, v)\leqq_{\rm lex} (0,0),\ \ j\in J(\bar x)\bigg\},
	\\
	L^2(Q_0; \bar x, u)&&:=\bigg\{v\in X\, :\, G^2_j(\bar{x}; u, v)\leqq_{\rm lex} (0,0),\ \  j\in J(\bar x)\bigg\},
	\\
	L_0^2(Q_0; \bar x, u)&&:=\bigg\{v\in X\, :\, G^2_j(\bar{x}; u, v)<_{\rm lex} (0,0),\ \  j\in J(\bar x)\bigg\},
\end{eqnarray*}
\begin{eqnarray*}
	A(\bar x; u)&&:=\bigg\{v\in X\,:\, \forall j\in J(\bar x; u)\,\,\exists \delta_j>0 \ \ \mbox{with}\ \ g_j\bigg(\bar x+tu+\frac12t^2v\bigg)\leqq 0
	\\
	&& \qquad \qquad\qquad \qquad\qquad \qquad\qquad \qquad \qquad \qquad\qquad \qquad\,\forall t\in (0,\delta_j)\bigg\},
	\\
	B(\bar x; u)&&:=\bigg\{v\in X\,:\, g_j^{\circ}(\bar{x}, v)+g_j^{\circ\circ}(\bar{x}, u) \leqq 0, \ \ \forall j\in J(\bar x; u)\bigg\},
\end{eqnarray*}
where
\begin{align*}
F^2_i(\bar{x}; u, v)&:= \left(f_i^{\circ}(\bar{x}, u), f_i^{\circ}(\bar{x}, v)+f_i^{\circ\circ}(\bar{x}, u) \right), \ \  i\in I, v\in X,
\\
G^2_j(\bar{x}; u,v)&:=\left(g_j^{\circ}(\bar{x}, u), g_j^{\circ}(\bar{x}, v)+g_j^{\circ\circ}(\bar{x}, u) \right), \ \  j\in J(\bar x), v\in X.
\end{align*}
For brevity, we denote $L(Q; \bar x):=L^2(Q; \bar x, 0)$. It is easily seen that, for each $u\in\mathcal{C}(\bar x)$, we have
\begin{equation*}
L_0^2(Q_0; \bar x, u)=\bigg\{v\in X\,:\, g_j^\circ(\bar x, v)+g_j^{\circ\circ}(\bar x, u)<0, \ \ j\in J(\bar x, u)\bigg\}.
\end{equation*}

\begin{definition}\label{def-SOCQ}{\rm
Let  $\bar x\in Q_0$ and $u\in X$. We say that:
\begin{enumerate} [(i)]
\item The {\em  Zangwill second-order  constraint qualification} holds at $\bar x$ for the direction $u$ iff
\[
B(\bar x; u)\subset \text{cl}\, A(\bar x; u). \tag{$ZSCQ$}\label{SACQ_1}
\]	
\item   The {\em  Abadie second-order  constraint qualification} holds at $\bar x$ for the direction $u$ iff
			\[
			L^{2}(Q_0; \bar x, u)\subset  T^2(Q_0; \bar x, u). \tag{$ASCQ$}\label{SACQ_2}
			\]	
\item   The {\em Mangasarian--Fromovitz second-order  constraint qualification} holds at $\bar x$ for the direction $u$ iff
			\[ L_0^2(Q_0; \bar x, u) \neq \emptyset.\tag{$MFSCQ$} \label{MFSCQ}
			\]			
\item  The {\em weak Abadie second-order  regularity condition} holds at $\bar x$ for the direction $u$ iff
			\[
			L^{2}(Q; \bar x, u)\subset  T^2(Q_0; \bar x, u). \tag{$WASRC$}\label{SACQ_3}
			\]
\end{enumerate}
	}
\end{definition}

The \eqref{SACQ_1} type was first introduced by Ivanov \cite[Definition 3.2]{Ivanov15} for $C^1$ functions. The \eqref{SACQ_2} type was proposed by Aghezzaf and  Hachimi  for \eqref{problem} with $C^2$ data; see \cite[p.40]{Aghezzaf99}. The \eqref{MFSCQ} type was first introduced in \cite{Ben-Tal80} for  $C^2$ scalar optimization problems. The \eqref{SACQ_3} type was used for $C^{1,1}$ vector optimization problems in \cite{Huy163}. For problems with only locally Lipschitz active constraints and objective functions, these conditions are new.

\begin{definition}\label{def3.2}{\rm Let  $\bar x\in Q_0$. We say that the {\em  Zangwill  constraint qualification $(ZCQ)$} (resp., {\em Abadie constraint qualification $(ACQ)$}, {\em  Mangasarian--Fromovitz constraint qualification $(MFCQ)$}, {\em weak Abadie regularity condition} $(WARC)$) holds at $\bar x$ iff the \eqref{SACQ_1} (resp., \eqref{SACQ_2}, \eqref{MFSCQ}, \eqref{SACQ_3}) holds at $\bar x$ for the direction $0$.
	}
\end{definition}

The following result shows that the \eqref{SACQ_3} is weaker than other constraint qualification conditions in Definition \ref{def-SOCQ}.
\begin{proposition}\label{relations-CQ} Let  $\bar x\in Q_0$ and $u\in X$. Then the following implications hold:
\begin{enumerate}[\rm(i)]
	\item  	$ (\, B(\bar x; u)\subset {\rm cl}\, A(\bar x; u) \, ) $ $\Rightarrow$ $ (\,L^{2}(Q_0; \bar x, u)\subset  T^2(Q_0; \bar x, u)\, )$ $\Rightarrow$\\ 	$ (\,L^{2}(Q; \bar x, u)\subset  T^2(Q_0; \bar x, u)\, ), \,\,i.e,$   $$\eqref{SACQ_1}\Rightarrow\eqref{SACQ_2}\Rightarrow\eqref{SACQ_3}.$$

	\item   $  (\,L_0^2(Q_0; \bar x, u) \neq \emptyset \, ) \Rightarrow	(\, L^{2}(Q_0; \bar x, u)\subset  T^2(Q_0; \bar x, u)\, ),\,\,i.e.,$
	$$\eqref{MFSCQ}\Rightarrow\eqref{SACQ_2}.$$

	\item
	 $(\,L_0^2(Q_0; \bar x, 0) \neq \emptyset \, )\,\, \Rightarrow\, (\,L_0^2(Q_0; \bar x, u) \neq \emptyset,\,\,\forall\, u\,\in\mathcal{C}(\bar x)\, ).$
\end{enumerate}
	
\end{proposition}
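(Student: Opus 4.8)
The plan is to establish the three implications in Proposition~\ref{relations-CQ} by unwinding the definitions of the sets involved and exploiting the lexicographic order structure together with the isotonicity and the finiteness/subadditivity of the generalized directional derivatives recalled in the preliminaries.

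For part~(i), the second implication $\eqref{SACQ_2}\Rightarrow\eqref{SACQ_3}$ is the easy one: since $Q\subset Q_0$, the isotonicity of $T^2(\,\cdot\,;\bar x,u)$ gives $T^2(Q;\bar x,u)\subset T^2(Q_0;\bar x,u)$, and directly from the defining inequalities one has $L^2(Q;\bar x,u)\subset L^2(Q_0;\bar x,u)$ (the former imposes the same constraint inequalities $G^2_j\leqq_{\rm lex}(0,0)$ plus extra ones on the $F^2_i$). Chaining these inclusions with the hypothesis $L^2(Q_0;\bar x,u)\subset T^2(Q_0;\bar x,u)$ yields $L^2(Q;\bar x,u)\subset T^2(Q_0;\bar x,u)$. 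The first implication $\eqref{SACQ_1}\Rightarrow\eqref{SACQ_2}$ is the substantive part: the plan is to show $L^2(Q_0;\bar x,u)\subset B(\bar x;u)$ and $A(\bar x;u)\subset T^2(Q_0;\bar x,u)$, and then use that $T^2(Q_0;\bar x,u)$ is closed to pass from $A(\bar x;u)$ to $\mathrm{cl}\,A(\bar x;u)$. The inclusion $L^2(Q_0;\bar x,u)\subset B(\bar x;u)$ follows by inspecting cases on $g_j^\circ(\bar x,u)$: for $j\in J(\bar x;u)$ one has $g_j^\circ(\bar x,u)=0$, so the lexicographic condition $G^2_j\leqq_{\rm lex}(0,0)$ forces the second coordinate $g_j^\circ(\bar x,v)+g_j^{\circ\circ}(\bar x,u)\leqq 0$, which is exactly the defining inequality of $B(\bar x;u)$. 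For $A(\bar x;u)\subset T^2(Q_0;\bar x,u)$, given $v\in A(\bar x;u)$ I would construct the second-order feasible sequence: take $t_k\downarrow 0$, set $v^k=v$ (or a suitable corrected perturbation), and verify that $\bar x+t_ku+\tfrac12 t_k^2 v\in Q_0$ for large $k$, handling the active constraints $j\in J(\bar x;u)$ through the defining property of $A$ and the nonactive-at-$u$ constraints $j\in J(\bar x)\setminus J(\bar x;u)$ (where $g_j^\circ(\bar x,u)<0$) via a first-order decrease argument, while constraints $j\in J\setminus J(\bar x)$ stay inactive by continuity.

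For part~(ii), the plan is to show $L^2(Q_0;\bar x,u)\subset T^2(Q_0;\bar x,u)$ under the nonemptiness of $L^2_0(Q_0;\bar x,u)$ by a standard perturbation/convexity argument: pick $w\in L^2_0(Q_0;\bar x,u)$ so that $g_j^\circ(\bar x,w)+g_j^{\circ\circ}(\bar x,u)<0$ strictly for all $j\in J(\bar x;u)$ (using the simplified form of $L^2_0$ valid for $u\in\mathcal{C}(\bar x)$, or directly the strict lexicographic inequalities in general), and for $v\in L^2(Q_0;\bar x,u)$ form $v_\varepsilon=v+\varepsilon w$. The strict inequalities for $w$ combine with the nonstrict ones for $v$ (using subadditivity of $g_j^\circ(\bar x,\cdot)$) to place $v_\varepsilon$ in $L^2_0$-type position, from which membership in $T^2(Q_0;\bar x,u)$ can be obtained by constructing the feasible second-order sequence; letting $\varepsilon\downarrow 0$ and invoking closedness of $T^2(Q_0;\bar x,u)$ gives $v\in T^2(Q_0;\bar x,u)$.

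For part~(iii), the plan is direct. Assume $L^2_0(Q_0;\bar x,0)\neq\emptyset$, i.e.\ there is $w$ with $g_j^\circ(\bar x,w)<0$ for all $j\in J(\bar x)=J(\bar x;0)$. Fix $u\in\mathcal{C}(\bar x)$; then for $j\in J(\bar x;u)$ one has $g_j^\circ(\bar x,u)=0$, and I would test a vector of the form $v=\alpha w$ (or $v=w$ rescaled) in $L^2_0(Q_0;\bar x,u)$, checking the strict inequality $g_j^\circ(\bar x,v)+g_j^{\circ\circ}(\bar x,u)<0$. Using positive homogeneity of $g_j^\circ(\bar x,\cdot)$ in its direction together with the finiteness of the second-order term $g_j^{\circ\circ}(\bar x,u)$, a sufficiently large scalar $\alpha>0$ makes the strictly negative first-order part dominate the finite constant $g_j^{\circ\circ}(\bar x,u)$, yielding $\alpha w\in L^2_0(Q_0;\bar x,u)$ and hence nonemptiness. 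The main obstacle I anticipate is the construction of the second-order tangent sequences in parts~(i) and~(ii): passing from the derivative-based inequalities defining the approximation sets to an honest sequence $\bar x+t_ku+\tfrac12 t_k^2 v^k\in Q_0$ requires careful use of Lemma~\ref{lemma1} and Lemma~\ref{lemma2} (or their feasibility converse) to control the second-order expansion of each active $g_j$ along the chosen arc, and separating the active-at-$u$ indices $J(\bar x;u)$ from the strictly-decreasing indices $J(\bar x)\setminus J(\bar x;u)$ is where the bookkeeping is most delicate.
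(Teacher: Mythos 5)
Your skeleton is close to the paper's --- the same reduction $L^{2}(Q_0;\bar x,u)\subset B(\bar x;u)$, the same three-way case analysis on $j$ via Lemma~\ref{lemma1} and continuity, and in (iii) the same ``scale until the strictly negative first-order part dominates'' idea (your $\alpha w$ versus the paper's $u+tv^0$, both fine) --- but one step in part (ii) is genuinely wrong as written. The perturbation $v_\varepsilon=v+\varepsilon w$ does not stay in the required set; you need the \emph{convex} combination $(1-\varepsilon)v+\varepsilon w$, which is exactly what the paper uses ($v^k=s_kv^0+(1-s_k)v$, $s_k\downarrow0$). The reason is that $g_j^{\circ\circ}(\bar x,u)$ enters as an additive constant, so it must be split with total weight one. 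From $w\in L_0^2(Q_0;\bar x,u)$ you only get $g_j^{\circ}(\bar x,w)<-g_j^{\circ\circ}(\bar x,u)$ for $j\in J(\bar x;u)$; when $g_j^{\circ\circ}(\bar x,u)<0$ this is compatible with $g_j^{\circ}(\bar x,w)>0$, and then the sublinearity bound $g_j^{\circ}(\bar x,v+\varepsilon w)+g_j^{\circ\circ}(\bar x,u)\leqq\bigl[g_j^{\circ}(\bar x,v)+g_j^{\circ\circ}(\bar x,u)\bigr]+\varepsilon g_j^{\circ}(\bar x,w)$ can be positive for every $\varepsilon>0$. Concretely: $g(x)=x_1-x_2^2$, $\bar x=(0,0)$, $u=(0,1)$ gives $g^{\circ}(\bar x,d)=d_1$ and $g^{\circ\circ}(\bar x,u)=-2$, so $v=(2,0)\in L^2(Q_0;\bar x,u)$ and $w=(1,0)\in L_0^2(Q_0;\bar x,u)$, yet $v+\varepsilon w=(2+\varepsilon,0)$ satisfies $g^{\circ}(\bar x,v+\varepsilon w)+g^{\circ\circ}(\bar x,u)=\varepsilon>0$, i.e.\ $v_\varepsilon$ leaves $L^2(Q_0;\bar x,u)$ altogether. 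With the convex combination one gets $\leqq(1-\varepsilon)[g_j^{\circ}(\bar x,v)+g_j^{\circ\circ}(\bar x,u)]+\varepsilon[g_j^{\circ}(\bar x,w)+g_j^{\circ\circ}(\bar x,u)]<0$, and the rest of your plan (feasible sequence, closedness of $T^2$, $\varepsilon\downarrow0$) goes through.

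A second, smaller issue is in part (i): your standalone claim $A(\bar x;u)\subset T^2(Q_0;\bar x,u)$ is false for arbitrary $u$, because for $j\in J(\bar x)\setminus J(\bar x;u)$ the definition gives only $g_j^{\circ}(\bar x,u)\neq0$, not $g_j^{\circ}(\bar x,u)<0$ as you assert parenthetically. If some active constraint has $g_j^{\circ}(\bar x,u)>0$ the inclusion fails badly: with $g_1(x)=x$, $g_2(x)=-x$ on $\mathbb{R}$, $\bar x=0$, $u=1$, one has $Q_0=\{0\}$, $J(\bar x;u)=\emptyset$, hence $A(\bar x;u)=\mathbb{R}$ while $T^2(Q_0;\bar x,u)=\emptyset$. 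The repair is cheap but must be stated: when $L^2(Q_0;\bar x,u)=\emptyset$ the conclusion \eqref{SACQ_2} is vacuous, and otherwise the first lexicographic coordinate forces $g_j^{\circ}(\bar x,u)\leqq0$ for all $j\in J(\bar x)$, which is exactly what your Case-2 argument needs. The paper sidesteps this by fixing $v\in L^2(Q_0;\bar x,u)$ first and running a diagonal argument with approximants $v^k\in A(\bar x;u)$, $v^k\to v$, never isolating $A\subset T^2$ as a separate inclusion; your variant (constant sequence plus closedness of $T^2$, which the paper records in the preliminaries) is legitimate once restricted to the nonvacuous case. Finally, note that both your part (iii) and the paper's silently assume $g_j^{\circ\circ}(\bar x,u)<+\infty$, which for merely locally Lipschitz data can fail; that gap is the paper's as much as yours.
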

\begin{proof} (i) Clearly, $L^2(Q; \bar x, u) \subset L^2(Q_0; \bar x, u)$. Thus the second implication of (i) is trivial. We now assume that the \eqref{SACQ_1} holds at $\bar x$ for the direction $u\in X$. Fix $v\in L^{2}(Q_0; \bar x, u)$. Then,
\begin{equation*}
G^2_j(\bar{x}; u, v)\leqq_{\rm lex} (0,0),\ \ \forall j\in J(\bar x).
\end{equation*}
This implies that
\begin{align*}
g_j^{\circ}(\bar{x}, u)&\leqq 0, \ \ \forall j\in J(\bar x),
\\
g_j^{\circ}(\bar{x}, v)+g_j^{\circ\circ}(\bar{x}, u) &\leqq 0, \ \ \forall j\in J(\bar x; u).
\end{align*}
Thus, $v\in B(\bar x; u)$. Since the \eqref{SACQ_1} holds at $\bar x$ for the direction $u$, we have $v\in \mathrm{cl}\, A(\bar x; u)$. Thus there exists a sequence $\{v^k\}\subset A(\bar x; u)$ converging to $v$. Let $\{t_h\}$ be an arbitrary positive sequence converging to $0$. We claim that there is a subsequence $\{t_{h_k}\}\subset \{t_h\}$ such that
\begin{equation*}
\bar x+t_{h_k}u+\frac12t^2_{h_k}v^k\in Q_0, \ \ \forall k\in \mathbb{N}.
\end{equation*}
We will prove this claim by induction on $k$.

In case of $k=1$, let $\{x_h\}$ be a sequence defined by
$$x^h:=\bar x+t_{h}u+\frac12t^2_{h}v^1\ \ \ \text{for all}\ \ h\in \mathbb{N}.$$
Let us consider the following possible cases for $j\in J$.

{\bf Case 1.} $j\notin J(\bar x)$. This means that $g_j(\bar x)<0$. Since $g_j$ is continuous at $\bar x$ and $\lim\limits_{h\to\infty} x^h=\bar x$, there is $H_1\in\mathbb{N}$ such that $g_j\left(x^h\right)<0$ for all $h\geqq H_1$.

{\bf Case 2.} $j\in J(\bar x)\setminus J(\bar x; u)$. This means that
$g_j(\bar x)=0$ and $g_j^{\circ} (\bar x, u)<0$. We claim that there exists $H_2\in\mathbb{N}$ such that $g_j\left(x^h\right)<0$ for all $h\geqq H_2$.
Indeed, if otherwise, there is a subsequence $\{t_{h_l}\}\subset \{t_h\}$ satisfying
$$g_j\left(\bar x+t_{h_l}u+\frac12t^2_{h_l}v^1\right)\geqq g_j(\bar x)=0, \ \ \forall l\in\mathbb{N},$$
or, equivalently,
\begin{equation*}
g_j\left(\bar x+t_{h_l}\left(u+\frac12t_{h_l}v^1\right)\right)\geqq g_j(\bar x), \ \ \forall l\in\mathbb{N}.
\end{equation*}
Clearly, $\lim\limits_{l\to\infty}\left(u+\frac12t_{h_l}v^1\right)=u$.  By Lemma \ref{lemma1}, $g_j^{\circ}(\bar x, u)\geqq 0$, and which contradicts with the fact that $g_j^{\circ} (\bar x, u)<0$.

{\bf Case 3.} $j\in J(\bar x; u)$. Since $v^1\in A(\bar x; u)$ and $j\in J(\bar x; u)$, there exists $\delta_j>0$ such that
$$g_j\left(\bar x+tu+\frac12t^2v^1\right)\leqq 0, \ \ \forall t\in (0, \delta_j).$$
From $\lim\limits_{h\to\infty}t_h=0$ it follows that there is $H_3\in\mathbb{N}$ such that $t_h\in (0, \delta_j)$ for all $h\geqq H_3$. Thus, $g_j\left(x^h\right)\leqq 0$ for all $h\geqq H_3$.

Put $h_1:=\max\{H_1, H_2, H_3\}$. Then, we have $g_j\left(x^h\right)\leqq 0$ for all $h\geqq h_1$ and $j\in J$. This implies that
$$\bar x+t_{h}u+\frac12t^2_{h}v^1\in Q_0 \ \ \forall h\geqq h_1.$$
Thus, by induction on $k$, there exists a subsequence $\{t_{h_k}\}\subset \{t_h\}$ such that
\begin{equation*}
\bar x+t_{h_k}u+\frac12t^2_{h_k}v^k\in Q_0, \ \ \forall k\in \mathbb{N}.
\end{equation*}
From this, $\lim\limits_{k\to\infty}t_{h_k}=0$, and $\lim\limits_{k\to\infty} v^k=v$, it follows that $v\in T^2(Q_0; \bar x, u)$. Since $v$ is arbitrary in $L^2(Q_0; \bar x, u)$, we have
$$L^2(Q_0; \bar x, u)\subset T^2(Q_0; \bar x, u).$$
Thus the \eqref{SACQ_2} holds at $\bar x$ for the direction $u$.

(ii) We now assume that the \eqref{MFSCQ} holds at $\bar x$ for the direction $u\in X$ and $v^0\in L^2_0(Q_0; \bar x, u)$. Fix $v\in L^2(Q_0; \bar x, u)$. Then,
 \begin{align*}
 g_j^{\circ}(\bar{x}, u)&\leqq 0, \ \ \forall j\in J(\bar x),
 \\
 g_j^{\circ}(\bar{x}, v)+g_j^{\circ\circ}(\bar{x}, u) &\leqq 0, \ \ \forall j\in J(\bar x; u).
 \end{align*}
Let $\{s_k\}$ and $\{t_h\}$ be any positive sequences converging to zero. For each $k\in\mathbb{N}$, put $v^k:=s_kv^0+(1-s_k)v$. Then, $\lim\limits_{k\to\infty}v^k=v$. We claim that there exists a subsequence $\{t_{h_k}\}$ of $\{t_h\}$ such that
\begin{equation*}
\bar x+t_{h_k}u+\frac12t^2_{h_k}v^k\in Q_0, \ \ \forall k\in \mathbb{N}.
\end{equation*}
Consequently, $v\in T^2(Q_0; \bar x, u)$ and we therefore get the \eqref{SACQ_2}.

Indeed, for $k=1$, we have that $v^1=s_1v^0+(1-s_1)v$. Fix $j\in J$. If $j\in J\setminus J(\bar x; u)$, then,  we prove as in Case 1 and Case 2 of the proof of assertion (i) that there exists $H_1\in\mathbb{N}$ such that
\begin{equation*}
g_j\left(x^h\right)<0, \ \ \forall h\geqq H_1,
\end{equation*}
where $x^h:=\bar x+t_{h}u+\frac12t^2_{h}v^1$. If $j\in J(\bar x; u)$, then
\begin{equation*}
g_j^{\circ}(\bar{x}, v^0)+g_j^{\circ\circ}(\bar{x}, u) < 0.
\end{equation*}
Hence,
\begin{align*}
g_j^{\circ}(\bar{x}, v^1)+g_j^{\circ\circ}(\bar{x}, u) &\leqq s_1g_j^{\circ}(\bar{x}, v^0)+(1-s_1)g_j^{\circ}(\bar{x}, v)+g_j^{\circ\circ}(\bar{x}, u)
\\
&= s_1[g_j^{\circ}(\bar{x}, v^0)+g_j^{\circ\circ}(\bar{x}, u)]+(1-s_1)[g_j^{\circ}(\bar{x}, v)+g_j^{\circ\circ}(\bar{x}, u)]
\\
&<0.
\end{align*}
Thus,
\begin{align*}
\limsup_{h\to\infty}\frac{g_j(x^h)}{\frac{1}{2}t_h^2}&=\limsup_{h\to\infty}\frac{g_j(x^h)-g_j(\bar x)-t_hg_j^\circ(\bar x; u)}{\frac{1}{2}t_h^2}
\\
&\leqq \limsup_{h\to\infty}\frac{g_j((\bar x+t_hu)+\frac{1}{2}t_h^2v^1)-g_j(\bar x+t_hu)}{\frac{1}{2}t_h^2}
\\
&+\limsup_{h\to\infty}\frac{g_j(\bar x+t_hu)-g_j(\bar x)-t_hg_j^\circ(\bar x; u)}{\frac{1}{2}t_h^2}
\\
&\leqq g_j^\circ(\bar x; v^1)+g_j^{\circ\circ}(\bar x; u)
\\
&<0.
\end{align*}
This implies that there exists $H_2\in\mathbb{N}$ such that $g_j(x^h)<0$ for all $h\geqq H_2$. Put $h_1:=\max\{H_1, H_2\}$. Then we have $g_j(x^h)<0$ for all $h\geqq h_1$ and $j\in J$. Thus,
$$\bar x+t_hu+\frac{1}{2}t_h^2v^1\in Q_0\ \ \forall h\geqq h_1,$$
and the assertion follows by induction on $k$.

(iii)  Assume that there exists $v^0\in L_0^2(Q_0; \bar x, 0)$. Then $g_j^\circ(\bar x, v^0)<0$ for all $j\in J(\bar x)$. Let $u\,\in\mathcal{C}(\bar x)$. For each $t>0$, put $v(t):=u+tv^0$. We claim that there exists $t>0$ such that $v(t)\in L_0^2(Q_0; \bar x, u)$. Indeed, for each $j\in J(\bar x; u)$, one has
\begin{align*}
g^\circ_j(\bar x, v(t))+g_j^{\circ\circ}(\bar x, u)&\leqq g^\circ_j(\bar x, u)+tg_j^\circ(\bar x, v^0)+g_j^{\circ\circ}(\bar x, u)
\\
&=tg_j^\circ(\bar x, v^0)+g_j^{\circ\circ}(\bar x, u)
\\
&<0
\end{align*}
for $t$ large enough. This implies that $v(t)\in L_0^2(Q_0; \bar x, u)$ for $t$ large enough, as required.		
\end{proof}

The relations between second-order constraint qualifications are summarized in Figure \ref{Fig1}.
\begin{center}
	\begin{figure}[htp]
		\begin{center}
			\includegraphics[height=4cm,width=7cm]{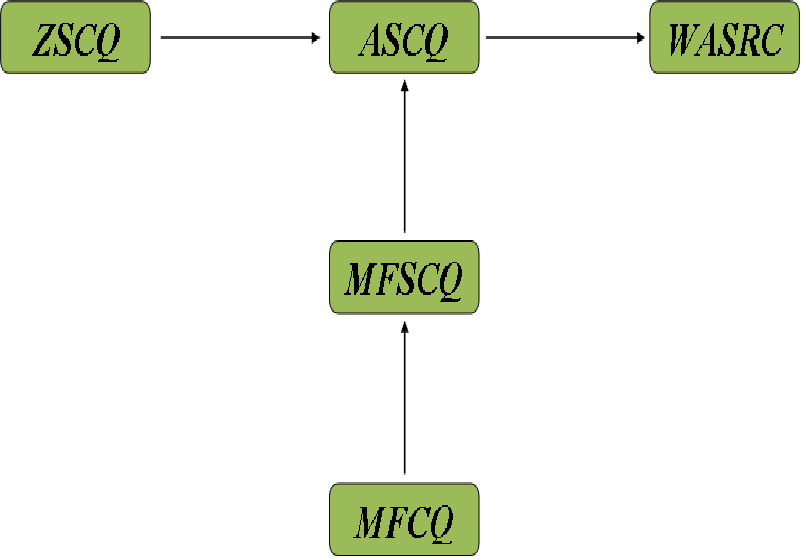}
		\end{center}
		\caption{Relations between second-order constraint qualifications}
		\label{Fig1}
	\end{figure}
\end{center}

\begin{remark}
{\rm
The forthcoming Examples~\ref{ex4.1} and \ref{ex4.2} show that  $\eqref{SACQ_3} \not\Rightarrow\eqref{SACQ_1}$ and  $\eqref{SACQ_3} \not\Rightarrow\eqref{MFSCQ}$.
}
\end{remark}

For the remainder of this paper, we apply the \eqref{SACQ_3} to establish some second-order $KKT$ necessary optimality conditions for efficient solutions of   \eqref{problem}.  We point out that, by Proposition~\ref{relations-CQ},  these results  
still valid when the \eqref{SACQ_3}
is replaced by one of \eqref{SACQ_1}, \eqref{SACQ_2} and \eqref{MFSCQ}.

\section{Second-order optimality conditions for efficiencies}
\label{Second_order_optim_sect}

In this section, we apply the \eqref{SACQ_3} to establish some second-order $KKT$ necessary optimality conditions in primal form for local (weak) efficient solutions of   \eqref{problem}.

The following theorem gives a first-order necessary optimality condition for  \eqref{problem} under the reqularity condition ($WARC$).

\begin{theorem}\label{first_order_nec}
	 If $\bar x\in Q_0$ is a local {\rm(}weak{\rm)} efficient solution of \eqref{problem} and $(WARC)$ holds at $\bar x$, then the system
	\begin{align}
	f_i^{\circ} (\bar x, u)&<0, \ \ i\in I, \label{equa:3}
	\\
	g_j^{\circ}(\bar x, u)&\leqq 0, \ \ j\in J(\bar x),\label{equa:4}
	\end{align}
	has no solution $u\in X$.
\end{theorem}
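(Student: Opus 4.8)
The plan is to argue by contradiction, and the whole argument hinges on recognizing that the hypothesis $(WARC)$ is exactly $L^2(Q;\bar x,0)\subset T^2(Q_0;\bar x,0)$ together with the two identities already recorded in the excerpt: $L(Q;\bar x)=L^2(Q;\bar x,0)$ and $T^2(Q_0;\bar x,0)=T(Q_0;\bar x)$. First I would unwind what $L^2(Q;\bar x,0)$ actually is. Since $f_i^{\circ}(\bar x,0)=0$ and $f_i^{\circ\circ}(\bar x,0)=0$, the pair $F^2_i(\bar x;0,v)$ collapses to $(0,f_i^{\circ}(\bar x,v))$, and likewise $G^2_j(\bar x;0,v)=(0,g_j^{\circ}(\bar x,v))$. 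Reading off the lexicographic condition $\leqq_{\rm lex}(0,0)$ on pairs whose first coordinate is $0$, one sees that membership in $L^2(Q;\bar x,0)$ is equivalent to the plain sign conditions $f_i^{\circ}(\bar x,v)\leqq 0$ for all $i\in I$ and $g_j^{\circ}(\bar x,v)\leqq 0$ for all $j\in J(\bar x)$. This computation is routine but must be done carefully, as it is the bridge between the abstract definition and the system in the statement.

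Next, suppose for contradiction that some $u\in X$ solves \eqref{equa:3}--\eqref{equa:4}, so that $f_i^{\circ}(\bar x,u)<0$ for every $i\in I$ and $g_j^{\circ}(\bar x,u)\leqq 0$ for every $j\in J(\bar x)$. Since strict negativity implies nonpositivity, the preceding description gives $u\in L^2(Q;\bar x,0)=L(Q;\bar x)$. Invoking $(WARC)$ then yields $u\in T^2(Q_0;\bar x,0)=T(Q_0;\bar x)$, so by the definition of the tangent cone there are sequences $t_k\downarrow 0$ and $u^k\to u$ with $\bar x+t_ku^k\in Q_0$ for all $k$. The feasible points $x^k:=\bar x+t_ku^k$ converge to $\bar x$, hence lie in any prescribed neighborhood $U$ of $\bar x$ for $k$ large.

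The final step exploits (weak) efficiency to produce a sign contradiction. Because $\bar x$ is a local weak efficient solution (and every efficient solution is weak efficient, so it suffices to treat this case), for $k$ large the relation $f(x^k)<f(\bar x)$ must fail, i.e.\ for each such $k$ there is an index $i_k\in I$ with $f_{i_k}(x^k)\geqq f_{i_k}(\bar x)$. Since $I$ is finite, after passing to a subsequence I may assume $i_k\equiv i_0$, giving $f_{i_0}(\bar x+t_ku^k)\geqq f_{i_0}(\bar x)$ along the sequence $(t_k,u^k)\to(0^+,u)$. Applying Lemma~\ref{lemma1} with $F=f_{i_0}$ forces $f_{i_0}^{\circ}(\bar x,u)\geqq 0$, which contradicts $f_{i_0}^{\circ}(\bar x,u)<0$ coming from \eqref{equa:3}. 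Hence no such $u$ exists. I expect the main obstacles to be purely technical rather than conceptual: handling the lexicographic reduction correctly at $u=0$, and the subsequence extraction that pins down a single index $i_0$ so that Lemma~\ref{lemma1} can be applied; Lemma~\ref{lemma1} is precisely the tool that converts the geometric feasibility of the $x^k$ into the required nonnegativity of the Clarke derivative.
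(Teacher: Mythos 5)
Your proposal is correct and follows essentially the same route as the paper: contradiction, the identification $L(Q;\bar x)=L^2(Q;\bar x,0)$ with $(WARC)$ giving $u\in T(Q_0;\bar x)$, extraction of sequences $t_k\downarrow 0$, $u^k\to u$ with $\bar x+t_ku^k\in Q_0$, and Lemma~\ref{lemma1} to contradict the strict inequalities \eqref{equa:3}. The only difference is cosmetic: the paper shows every $f_i$ eventually decreases strictly along the sequence and then contradicts local weak efficiency, whereas you invoke weak efficiency first and pin down a single violating index by pigeonhole before applying Lemma~\ref{lemma1} — the same subsequence argument in reverse order.
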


\begin{proof}
Arguing by contradiction, assume that there exists $u\in X$ satisfying conditions  \eqref{equa:3} and \eqref{equa:4}. This implies that $u\in L(Q; \bar x)$.  Since the $(WARC)$ holds at $\bar x$, one has
$$L(Q; \bar x)\subset T(Q_0; \bar x).$$
Consequently, $u\in T(Q_0; \bar x)$. Thus there exist $t_k\to 0^+$ and $u^k\to u$ such that
$$\bar x+t_ku^k\in Q_0$$
for all $k\in\mathbb{N}$. We claim that, for each $i\in I$, there exists $K_i\in\mathbb{N}$ satisfying
$$f_i(\bar x+t_ku^k)<f_i(\bar x), \ \ \forall k\geqq K_i.$$
Indeed, if otherwise, there exist $i\in I$ and a sequence $\{k_l\}\subset \mathbb{N}$ such that
$$f_i(\bar x+t_{k_l}u^{k_l})\geqq f_i(\bar x), \ \ \forall l\in\mathbb{N}. $$
By Lemma \ref{lemma1}, we have  $f_i^{\circ}(\bar x, u)\geqq 0$,  contrary to \eqref{equa:3}.

Put $K_0:=\max\, \{K_1, \ldots, K_p\}$. Then,
$$f_i((\bar x+t_ku^k)<f_i(\bar x)$$
for all $k\geqq K_0$ and $i\in I$, which contradicts the hypothesis of the theorem.
\end{proof}
\begin{remark}{\rm
		\begin{enumerate}[(i)]
			\item  Recently, Gupta et al. \cite[Theorems 3.1]{Gupta2017} showed that {\em``If $\bar x$ is an efficient solution of \eqref{problem}, $X=\mathbb{R}^n$, for each $i\in I$, $f_i$ is $\partial^c$-quasiconcave at $\bar x$, and there exists $i\in I$ such that
				\begin{equation}\label{Gupta-condition}
				L(M^i; \bar x)\subset T(M^i; \bar x),
				\end{equation}
				where
				\begin{align*}
				M^i&:=\{x\in Q_0\;:\; f_i(x)\leqq f_i(\bar x)\},
				\\
				L(M^i; \bar x)&:=\{u\in X\;:\; f^{\circ}_i(\bar x; u)\leqq 0, g^{\circ}_j(\bar x; u)\leqq 0, j\in J(\bar x)\},
				\end{align*}	
				then the system \eqref{equa:3}--\eqref{equa:4} has no solution''}.
			
			Clearly,
			\begin{align*}
			T(M^i; \bar x)&\subset T(Q_0; \bar x),
			\\
			L(Q; \bar x)&\subset L(M^i; \bar x).
			\end{align*}
			This implies that if condition \eqref{Gupta-condition} holds at $\bar x$, then so does the $(WARC)$. Thus, Theorem \ref{first_order_nec} improves \cite[Theorems 3.1]{Gupta2017}. We note here that the assumption that  $f_i$ is $\partial^c$-quasiconcave at $\bar x$ is not necessary in our result.
			\item  Theorem \ref{first_order_nec} also improves \cite[Theorems 3.3]{Gupta2017}. Theorem 3.3 in \cite{Gupta2017} is as follows: {\em ``If $\bar x$ is a weak efficient solution of \eqref{problem}, $X=\mathbb{R}^n$, $Q_0$ is convex, for each $i\in I$, $f_i$ is $\partial^c$-quasiconcave at $\bar x$, and there exists $i\in I$ such that
				\begin{equation}\label{Gupta-condition-ii}
				L(M^i; \bar x)\subset \mathrm{cl}\,\mathrm{conv}\, T(M^i; \bar x),
				\end{equation} 		
				then the system \eqref{equa:3}--\eqref{equa:4} has no solution''}.
			
			Since $T(M^i; \bar x)\subset T(Q_0; \bar x)$ and $Q_0$ is a closed convex set, we have
			$$\mathrm{cl}\,\mathrm{conv}\, T(M^i; \bar x) \subset T(Q_0; \bar x).$$
			This implies the $(WARC)$ is weaker than condition \eqref{Gupta-condition-ii} and so Theorem \ref{first_order_nec} sharpens  \cite[Theorems 3.3]{Gupta2017}. We would like to remark that our result does not require any convexity assumptions.
		\end{enumerate}		
		
	}
\end{remark}

Now we are ready to present our result of second-order $KKT$ optimality conditions for local (weak) efficient solutions of \eqref{problem} under the \eqref{SACQ_3}.
\begin{theorem}\label{nec_condition_weak_eff} Let $\bar x$ be a local {\rm(}weak{\rm)}  efficient solution of \eqref{problem}. Suppose that the (\ref{SACQ_3}) holds at $\bar x$ for any critical direction. Then, the  system
	\begin{align}
	F^2_i(\bar{x}; u, v)&<_{\rm lex} (0, 0),\ \ \ i\in I, \label{equa:5}
	\\
	G^2_j(\bar{x}; u, v)&\leqq_{\rm lex} (0, 0),\ \ \  j\in J(\bar x)\label{equa:6}.
	\end{align}
	has no solution $(u, v)\in X\times X$.
\end{theorem}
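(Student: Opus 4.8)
The plan is to argue by contradiction: suppose $(u,v)\in X\times X$ solves \eqref{equa:5}--\eqref{equa:6}. Reading off the first components of the lexicographic inequalities gives immediately $f_i^{\circ}(\bar x, u)\leqq 0$ for all $i\in I$ and $g_j^{\circ}(\bar x, u)\leqq 0$ for all $j\in J(\bar x)$. I would then split into two cases according to whether $u$ is a critical direction.

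If $f_i^{\circ}(\bar x, u)<0$ for every $i\in I$, then $u$ solves the first-order system \eqref{equa:3}--\eqref{equa:4}. Since $0\in\mathcal{C}(\bar x)$, the hypothesis that \eqref{SACQ_3} holds for every critical direction yields in particular the $(WARC)$ at $\bar x$, and Theorem \ref{first_order_nec} then forbids such a $u$, a contradiction. Otherwise there is at least one $i\in I$ with $f_i^{\circ}(\bar x, u)=0$; combined with the first-order consequences above, this is exactly the statement that $u\in\mathcal{C}(\bar x)$, so by hypothesis \eqref{SACQ_3} holds at $\bar x$ for this direction $u$, i.e. $L^2(Q; \bar x, u)\subset T^2(Q_0; \bar x, u)$.

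The key step is to pass $v$ through this inclusion. Since \eqref{equa:5} gives $F^2_i(\bar x; u, v)<_{\rm lex}(0,0)$, hence $\leqq_{\rm lex}(0,0)$, and \eqref{equa:6} gives $G^2_j(\bar x; u, v)\leqq_{\rm lex}(0,0)$, the definition of $L^2(Q; \bar x, u)$ shows $v\in L^2(Q; \bar x, u)$, whence $v\in T^2(Q_0; \bar x, u)$. By the definition of the second-order tangent set there exist $t_k\downarrow 0$ and $v^k\to v$ with $x^k:=\bar x+t_ku+\frac12 t_k^2v^k\in Q_0$ for all $k$, and clearly $x^k\to\bar x$, so eventually $x^k$ lies in the neighborhood on which $\bar x$ is optimal.

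Finally I would show $f_i(x^k)<f_i(\bar x)$ for all $i\in I$ once $k$ is large, contradicting local weak efficiency (which is implied by local efficiency, so both variants are covered). Fixing $i$ with $f_i^{\circ}(\bar x, u)<0$, I write $x^k=\bar x+t_ku^k$ with $u^k:=u+\frac12 t_kv^k\to u$; were $f_i(x^k)\geqq f_i(\bar x)$ along a subsequence, Lemma \ref{lemma1} would force $f_i^{\circ}(\bar x, u)\geqq 0$, a contradiction. For $i\in I(\bar x; u)$, i.e. $f_i^{\circ}(\bar x, u)=0$, \eqref{equa:5} forces $f_i^{\circ}(\bar x, v)+f_i^{\circ\circ}(\bar x, u)<0$, and a subsequence with $f_i(x^k)\geqq f_i(\bar x)$ would, via Lemma \ref{lemma2}, yield $f_i^{\circ}(\bar x, v)+f_i^{\circ\circ}(\bar x, u)\geqq 0$, again a contradiction. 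Thus for each $i$ there is $K_i$ with $f_i(x^k)<f_i(\bar x)$ for $k\geqq K_i$; taking $K_0:=\max_{i\in I}K_i$ produces $f(x^k)<f(\bar x)$ for $k\geqq K_0$ with $x^k\in Q_0$ arbitrarily close to $\bar x$, the desired contradiction. I expect the main obstacle to be the bookkeeping in this last step: the degenerate indices $i\in I(\bar x; u)$ are precisely where the second-order data enters, and Lemma \ref{lemma2} rather than Lemma \ref{lemma1} must be invoked, using crucially that $f_i^{\circ}(\bar x, u)=0$ there.
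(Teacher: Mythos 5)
Your proof is correct and follows essentially the same route as the paper's: derive the first-order inequalities from the lexicographic conditions, use Theorem \ref{first_order_nec} (via the $(WARC)$, i.e.\ \eqref{SACQ_3} at the critical direction $0$) to force some $f_i^{\circ}(\bar x,u)=0$ so that $u\in\mathcal{C}(\bar x)$, pass $v\in L^2(Q;\bar x,u)$ through the \eqref{SACQ_3} inclusion into $T^2(Q_0;\bar x,u)$, and then kill each index with Lemma \ref{lemma1} (when $f_i^{\circ}(\bar x,u)<0$) or Lemma \ref{lemma2} (when $i\in I(\bar x;u)$), contradicting local (weak) efficiency. Your explicit two-case split at the start is just an unpacking of the paper's one-line appeal to Theorem \ref{first_order_nec}, so there is nothing substantively different to report.
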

\begin{proof} Arguing by contradiction, assume that there exists $(u, v)\in X\times X$ satisfying conditions \eqref{equa:5} and \eqref{equa:6}. It follows that $v\in L^2 (Q; \bar x, u)$ and
\begin{eqnarray*}
	f_i^{\circ}(\bar x, u)&\leqq 0, \ \ \ &i\in I,
	\\
	g_j^{\circ}(\bar x, u)&\leqq 0, \ \ \ &j\in J (\bar x).
\end{eqnarray*}
Since the \eqref{SACQ_3} holds at $\bar x$, so does the $(WARC)$. By Theorem \ref{first_order_nec}, there exists $i\in I$ such that $f_i^{\circ}(\bar x, u)=0$. This means that $u$ is a critical direction of \eqref{problem} at $\bar x$. Since the \eqref{SACQ_3} holds at $\bar x$ for the critical direction $u$, we have
$$v\in T^2(Q_0; \bar x, u).$$
Thus there exist a sequence  $\{v^k\}$ converging to $v$ and a positive sequence  $\{t_k\}$ converging to $0$ such that
$$x^k:=\bar x+t_ku+\frac12t_k^2v^k\in Q_0,\ \ \ \forall k\in\mathbb{N}.$$
We claim that, for each $i\in I$, there exists $K_i\in \mathbb{N}$ such that
$$f_i(x^k)<f_i(\bar x)$$
for all $k\geqq K_i$. Indeed, if otherwise, there exist $i_0\in I$ and a sequence $\{k_l\}\subset \mathbb{N}$ satisfying
\begin{equation}\label{equa:7}
f_{i_0}\left(\bar x+t_{k_l}u+\frac12t^2_{k_l}v^{k_l}\right)\geqq f_{i_0}(\bar x), \ \ \forall l\in\mathbb{N}.
\end{equation}
We consider the following possible cases for $i_0$.

{\bf Case 1.} $i_0\in I(\bar x; u)$. This means that  $f_{i_0}^{\circ}(\bar x, u)=0$. From \eqref{equa:5} it follows that
\begin{equation}\label{equa:8}
f_{i_0}^{\circ}(\bar x, v)+f_{i_0}^{\circ\circ}(\bar x, u)<0.
\end{equation}
From \eqref{equa:7}, $\lim\limits_{l\to\infty} t_{k_l}=0$, $\lim\limits_{l\to\infty} v^{k_l}=v$, and Lemma \ref{lemma2}, it follows that
$$f_i^{\circ}(\bar x, v)+f_i^{\circ\circ}(\bar x, u)\geqq 0,$$
contrary to \eqref{equa:8}.

{\bf Case 2.} $i_0\notin I(\bar x; u)$. This means that $f_{i_0}^{\circ}(\bar x, u)<0$.  In this case we now rewrite \eqref{equa:7} as
$$f_{i_0}\left(\bar x+t_{k_l}\left(u+\frac12t_{k_l}v^{k_l}\right)\right)\geqq f_{i_0}(\bar x), \ \ \forall l\in\mathbb{N}.$$
From $\lim\limits_{l\to\infty} t_{k_l}=0$, $\lim\limits_{l\to\infty} \left(u+\frac12t_{k_l}v^{k_l}\right)=u$, and Lemma \ref{lemma1}, it follows that $f_{i_0}^{\circ}(\bar x, u)\geqq 0$. This contradicts the fact that $f_{i_0}^{\circ}(\bar x, u)<0$.

Put $K_0:=\max\{K_i\,:\, i\in I\}$. Then, we have
$$f_i(x^k)<f_i(\bar x)$$
for all $k\geqq K_0$ and $i\in I$, which contradicts the hypothesis of the theorem.
\end{proof}

An immediate consequence of the above theorem is the following corollary.
\begin{corollary}\label{second_order_nec}  Let $\bar x$ be a local {\rm(}weak{\rm)}  efficient solution  of \eqref{problem} and $u\in \mathcal{C}(\bar x)$. Suppose that the \eqref{SACQ_3} holds at $\bar x$ for the direction $u$. Then the following system
	\begin{align}
	&f_{i}^{\circ}(\bar x, v)+f_{i}^{\circ\circ}(\bar x, u)<0, \ \ i\in I(\bar x; u), \label{equa:9}
	\\
	&g_{j}^{\circ}(\bar x, v)+g_{j}^{\circ\circ}(\bar x, u)\leqq 0, \ \ j\in J(\bar x, u), \label{equa:10}
	\end{align}
	has no solution $v\in X$.
\end{corollary}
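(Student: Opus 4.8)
The plan is to argue by contradiction and to reduce the restricted system \eqref{equa:9}--\eqref{equa:10} to the full second-order system \eqref{equa:5}--\eqref{equa:6} treated in Theorem~\ref{nec_condition_weak_eff}, for the \emph{fixed} critical direction $u$. Suppose, contrary to the assertion, that some $v\in X$ satisfies \eqref{equa:9}--\eqref{equa:10}. First I would show that the pair $(u,v)$ solves \eqref{equa:5}--\eqref{equa:6}. The point is that the hypothesis $u\in\mathcal{C}(\bar x)$ controls exactly the indices outside $I(\bar x;u)$ and $J(\bar x;u)$ through the first lexicographic coordinate, while \eqref{equa:9}--\eqref{equa:10} handle the remaining indices through the second coordinate.

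Concretely, I would split the index sets. For $i\in I\setminus I(\bar x;u)$ criticality gives $f_i^{\circ}(\bar x,u)<0$, so the first coordinate of $F^2_i(\bar x;u,v)$ is already negative and $F^2_i(\bar x;u,v)<_{\rm lex}(0,0)$ regardless of $v$; for $i\in I(\bar x;u)$ one has $f_i^{\circ}(\bar x,u)=0$, and then \eqref{equa:9} forces the second coordinate $f_i^{\circ}(\bar x,v)+f_i^{\circ\circ}(\bar x,u)<0$, so again $F^2_i(\bar x;u,v)<_{\rm lex}(0,0)$. The same dichotomy applied to the constraints, using $g_j^{\circ}(\bar x,u)<0$ for $j\in J(\bar x)\setminus J(\bar x;u)$ and \eqref{equa:10} for $j\in J(\bar x;u)$, yields $G^2_j(\bar x;u,v)\leqq_{\rm lex}(0,0)$ for every $j\in J(\bar x)$. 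Thus $(u,v)$ satisfies \eqref{equa:5}--\eqref{equa:6}; in particular $v\in L^2(Q;\bar x,u)$.

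It then remains to run the efficiency argument. Since $u$ is already a critical direction by hypothesis, the first-order step of Theorem~\ref{nec_condition_weak_eff} (invoking $(WARC)$ and Theorem~\ref{first_order_nec} to produce some $i$ with $f_i^{\circ}(\bar x,u)=0$) is superfluous here, and \eqref{SACQ_3} is needed only for this single direction $u$, which is exactly what the corollary assumes. Hence $v\in L^2(Q;\bar x,u)\subset T^2(Q_0;\bar x,u)$, so there are $t_k\downarrow 0$ and $v^k\to v$ with $x^k:=\bar x+t_ku+\tfrac12 t_k^2v^k\in Q_0$. Applying Lemma~\ref{lemma2} on $I(\bar x;u)$ (where $f_i^{\circ}(\bar x,u)=0$) and Lemma~\ref{lemma1} on $I\setminus I(\bar x;u)$ (after writing $x^k=\bar x+t_k(u+\tfrac12 t_kv^k)$ and noting $u+\tfrac12 t_kv^k\to u$) shows $f_i(x^k)<f_i(\bar x)$ eventually for each $i$, contradicting the local (weak) efficiency of $\bar x$.

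The main---indeed only---subtlety is the apparent mismatch in hypotheses: Theorem~\ref{nec_condition_weak_eff} assumes \eqref{SACQ_3} for \emph{every} critical direction, whereas the corollary assumes it for the one direction $u$. I expect the crux of a clean write-up to be the observation that, once $u\in\mathcal{C}(\bar x)$ is fixed, the contradiction in the theorem is extracted using \eqref{SACQ_3} only at that $u$; everything else is the routine index bookkeeping of the preceding paragraphs. This is what makes the corollary an immediate specialization of the theorem rather than a genuinely new argument.
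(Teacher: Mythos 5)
Your proof is correct and matches the paper's intent: the paper gives no separate argument, declaring the corollary ``an immediate consequence'' of Theorem~\ref{nec_condition_weak_eff}, and your index-splitting reduction (criticality of $u$ handling $i\in I\setminus I(\bar x;u)$ and $j\in J(\bar x)\setminus J(\bar x;u)$ via the first lexicographic coordinate, \eqref{equa:9}--\eqref{equa:10} handling the rest) followed by a re-run of the theorem's Lemma~\ref{lemma1}/Lemma~\ref{lemma2} contradiction is exactly the fleshing-out of that claim. Your further observation that the corollary assumes \eqref{SACQ_3} only at the single direction $u$ while the theorem's statement demands it at every critical direction---so that one must appeal to the theorem's \emph{proof} (where, $u$ being critical by hypothesis, the first-order step via Theorem~\ref{first_order_nec} is unnecessary and \eqref{SACQ_3} is used only at $u$) rather than to the theorem as a black box---is a correct and worthwhile clarification of a point the paper glosses over.
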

\begin{remark}{\rm Suppose that $F\colon X\to \mathbb{R}$ is of class $C^1(X)$, i.e., $F$ is Fr\'echet differentiable and its gradient mapping is continuous on $X$. If $F$ is second-order directionally differentiable at $\bar x$, i.e., there exists
		$$F^{\prime\prime}(\bar x, u):= \lim\limits_{t\downarrow 0} \dfrac{F(\bar x+tu)-F(\bar x)- t\langle \nabla F(\bar x), u\rangle}{\frac12t^2},\ \ u\in X,$$
		then $F^{\prime\prime}(\bar x, u) = F^{\circ\circ}(\bar x, u)$ for all $u\in X$. In \cite{Ivanov15}, Ivanov considered problem \eqref{problem} under the following conditions:
		\begin{equation}\label{Ivanov_condition}\tag{$\mathfrak{C}$}
		\left.
		\begin{aligned}
		&\text{The functions } g_j, j\notin J(\bar x) \text{ are continuous at } \bar x;
		\\
		&\text{The functions }  f_i, i\in I, g_j, j\in J(\bar x) \text{ are of class } C^1(X);
		\\
		&\text{If } \langle \nabla f_i(\bar x), u\rangle=0, \text{ then there exists } f_i^{\prime\prime} (\bar x, u);
		\\
		&\text{If } \langle \nabla g_j(\bar x), u\rangle=0, j\in J(\bar x), \text{ then there exists } g_j^{\prime\prime} (\bar x, u).
		\end{aligned}
		\right \}
		\end{equation}
		If condition \eqref{Ivanov_condition} holds at $\bar x$ for the direction $u$, then the system \eqref{equa:9}--\eqref{equa:10} becomes
		\begin{align*}
		&\langle \nabla f_i(\bar x), v\rangle+f_i^{\prime\prime}(\bar x, u)<0, \ \ i\in I(\bar x, u),\\
		&\langle \nabla g_j(\bar x), v\rangle+g_j^{\prime\prime}(\bar x, u)\leqq 0, \ \ j\in J(\bar x, u).
		\end{align*}
		Since the \eqref{SACQ_3} is weaker than the \eqref{SACQ_1}, Corollary \ref{second_order_nec} improves and extends result of Ivanov \cite[Theorem 4.1]{Ivanov15} and  of Huy et al. \cite[Theorem 3.2]{Huy163}. To illustrate, we consider the following example.
	}
\end{remark}
\begin{example}\label{ex4.1}
{\rm Let $f\colon \mathbb{R}^2 \to\mathbb{R}^3$ and $g\colon \mathbb{R}^2\to \mathbb{R}$ be two maps defined by
		\begin{align*}
		f(x)&:= (f_1(x), f_2(x), f_3(x))=(x_2, x_1+x_2^2, -x_1-x_1|x_1|+x_2^2)\\
		g(x)&:=|x_1|+x_2^3-x_1^2, \ \ \forall x=(x_1, x_2)\in\mathbb{R}^2.
		\end{align*}
		Then the feasible set of \eqref{problem} is
		$$Q_0=\{(x_1, x_2)\in\mathbb{R}^2\,:\,|x_1|+x_2^3-x_1^2\leqq 0\}.$$
		Let $\bar x=(0,0)\in Q_0$. It is easy to check that $\bar x$ is an efficient solution of \eqref{problem}. For each $u=(u_1, u_2)\in\mathbb{R}^2$, we have
		\begin{align*}
		&f_1^{\circ}(\bar x, u)=\langle\nabla f_1(\bar x), u\rangle=u_2, f_2^{\circ}(\bar x, u)=\langle\nabla f_2(\bar x), u\rangle= u_1
		\\
		&f_3^{\circ}(\bar x, u)=\langle\nabla f_3(\bar x), u\rangle=-u_1, g^{\circ}(\bar x, u)=|u_1|.
		\end{align*}
		Thus,
		$$\mathcal{C}(\bar x)=\{(u_1, u_2)\in\mathbb{R}^2\,:\, u_1=0, u_2\leqq 0\}.$$
		Clearly, $0_{\mathbb{R}^2}:=(0,0)$ is a critical direction at $\bar x$. We claim that the \eqref{SACQ_3} holds at $\bar x$ for the direction $0_{\mathbb{R}^2}$. Indeed, we have
		\begin{equation*}
		L^2(Q; \bar x, 0_{\mathbb{R}^2})=\{(v_1, v_2)\in\mathbb{R}^2\,:\, v_1=0,  v_2\leqq 0\}.
		\end{equation*}
		An easy computation shows  that
		$$T^2(Q_0; \bar x, 0_{\mathbb{R}^2})=T(Q_0; \bar x)=\{(v_1, v_2)\in\mathbb{R}^2\,:\, v_1=0,  v_2\leqq 0\}.$$ This implies that the \eqref{SACQ_3} holds at $\bar x$ for the direction $0_{\mathbb{R}^2}$. By Corollary \ref{second_order_nec}, the system
		\begin{align*}
		&f_{i}^{\circ}(\bar x, v)+f_{i}^{\circ\circ}(\bar x, 0_{\mathbb{R}^2})<0, \ \ i\in I(\bar x; 0_{\mathbb{R}^2}),
		\\
		&g^{\circ}(\bar x, v)+g^{\circ\circ}(\bar x, 0_{\mathbb{R}^2})\leqq 0,
		\end{align*}
		has no solution $v\in\mathbb{R}^2$. The second-order necessary conditions of Huy et al. \cite[Theorem 3.2]{Huy163} and of Ivanov \cite[Theorem 4.1]{Ivanov15} are not applicable to this example as the  constraint function $g$ is not Fr\'echet differentiable at $\bar x$. Furthermore, the \eqref{SACQ_1} does not hold at $\bar x$ for the direction $0_{\mathbb{R}^2}$. Indeed, we have
		$$B(\bar x; 0_{\mathbb{R}^2})=\{(v_1, v_2)\in\mathbb{R}^2\,:\, v_1=0, v_2\in\mathbb{R}\}.$$
		Let $v=(v_1, v_2)\in\mathbb{R}^2$. We have $v\in A(\bar x; 0_{\mathbb{R}^2})$ if and only if
		there exists $\delta>0$ such that
		$$g\left(\bar x+t0_{\mathbb{R}^2}+\frac12t^2v\right)\leqq 0, \ \ \forall t\in(0,\delta),$$
		or, equivalently,
		\begin{equation}\label{equa:11}
		|v_1|-\frac12t^2v_1^2+\frac14t^4v_2^3\leqq 0, \ \ \forall t\in(0,\delta).
		\end{equation}
		It is easy to check that \eqref{equa:11} is true if and only if $v_1=0$ and $v_2\leqq 0$. Thus,
		$$A(\bar x; 0_{\mathbb{R}^2})=\{(v_1, v_2)\in\mathbb{R}^2\,:\, v_1=0, v_2\leqq 0\}.$$
		Clearly, $B(\bar x; 0_{\mathbb{R}^2}) \nsubseteq \mbox{cl}\,A(\bar x; 0_{\mathbb{R}^2})$. This means that the \eqref{SACQ_1} does not hold at $\bar x$ for the direction $0_{\mathbb{R}^2}$.
	}
\end{example}
\begin{remark}{\rm Recently, by using the \eqref{MFSCQ}, Luu \cite[Corollary 5.2]{Luu17} derived some second-order KKT necessary conditions for weak efficient solutions of differentiable vector problems in terms of the second-order upper generalized directional derivatives.  By Proposition \ref{relations-CQ}, the \eqref{SACQ_3} is weaker than the \eqref{MFSCQ}.
Thus, Corollary \ref{second_order_nec} improves  \cite[Corollary 5.2]{Luu17}.
To see this, let us consider the following example.
}
\end{remark}
\begin{example} \label{ex4.2}
Let $f\colon \mathbb{R}^2 \to\mathbb{R}^2$ and $g\colon \mathbb{R}^2\to \mathbb{R}^2$ be two maps defined by
	\begin{align*}
	f(x)&:= (f_1(x), f_2(x))=(x_1+x_2^2, -x_1-x_1|x_1|+x_2^2)\\
	g(x)&:=(g_1(x), g_2(x))=(x_1-x_2^2, -x_1-x_2^2), \ \ \forall x=(x_1, x_2)\in\mathbb{R}^2.
	\end{align*}
	Then the feasible set of \eqref{problem} is
	$$Q_0=\{(x_1, x_2)\in\mathbb{R}^2\,:\, -x_2^2\leqq x_1\leqq x^2_2\}.$$
	Let $\bar x=(0,0)\in Q_0$. Clearly,  $\bar x$ is an efficient solution of \eqref{problem}. It is easy to check that the \eqref{SACQ_3} holds at $\bar x$ for the critical direction $0_{\mathbb{R}^2}$ but not the \eqref{MFSCQ}. Thus Corollary \ref{second_order_nec} can be applied for this example, but not \cite[Corollary 5.2]{Luu17}.
	
\end{example}

\section{Strong second-order optimality condition for local Geoffrion properly efficiencies}\label{Strong_Second_order_optim_sect}

In this section, we apply the \eqref{SACQ_3} to establish a strong second-order $KKT$ necessary optimality condition for  a local Geoffrion properly efficient solution  of \eqref{problem}.
\begin{theorem}\label{Geoffrion_necessary_I} Let $\bar x\in Q_0$ be a local  Geoffrion properly efficient solution of \eqref{problem}. Suppose that the \eqref{SACQ_3} holds at $\bar x$ for any critical direction. Then the system
	\begin{eqnarray}
	F^2_i(\bar{x}; u, v)&\leqq_{\rm lex} (0, 0),\ \ \ &i\in I, \label{equ:G1}
	\\
	F^2_i(\bar{x}; u, v)&<_{\rm lex} (0, 0),\ \ \ &\mbox{at least one} \ \ i\in I(\bar x; u), \label{equ:G2}
	\\
	G^2_j(\bar{x}; u, v)&\leqq_{\rm lex} (0, 0),\ \ \  &j\in J(\bar x) \label{equ:G3}
	\end{eqnarray}
	has no solution $(u, v)\in X\times X$.		
\end{theorem}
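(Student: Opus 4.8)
The plan is to argue by contradiction, mirroring the proof of Theorem~\ref{nec_condition_weak_eff} but sharpening it with a rate analysis that exploits the trade-off bound in Geoffrion's definition. Suppose $(u,v)\in X\times X$ solves \eqref{equ:G1}--\eqref{equ:G3}. Reading off the first lexicographic coordinates, \eqref{equ:G1} gives $f_i^\circ(\bar x,u)\leqq 0$ for all $i\in I$, \eqref{equ:G3} gives $g_j^\circ(\bar x,u)\leqq 0$ for all $j\in J(\bar x)$, and \eqref{equ:G2} forces $I(\bar x;u)\neq\emptyset$, so $f_i^\circ(\bar x,u)=0$ for at least one $i$. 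Hence $u\in\mathcal C(\bar x)$. Moreover \eqref{equ:G1} and \eqref{equ:G3} say exactly that $v\in L^2(Q;\bar x,u)$, so the hypothesis \eqref{SACQ_3} at the critical direction $u$ yields $v\in T^2(Q_0;\bar x,u)$; that is, there are $t_k\downarrow 0$ and $v^k\to v$ with $x^k:=\bar x+t_ku+\frac12 t_k^2v^k\in Q_0$ for all $k$.

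Next I would split the objective indices into three sets: $I_1:=\{i: f_i^\circ(\bar x,u)<0\}=I\setminus I(\bar x;u)$, $I_2:=\{i\in I(\bar x;u): f_i^\circ(\bar x,v)+f_i^{\circ\circ}(\bar x,u)<0\}$ and $I_3:=\{i\in I(\bar x;u): f_i^\circ(\bar x,v)+f_i^{\circ\circ}(\bar x,u)=0\}$; by \eqref{equ:G2} we have $I_2\neq\emptyset$. For $i\in I_1$ the argument of Case~2 in Theorem~\ref{nec_condition_weak_eff} (writing $x^k=\bar x+t_k(u+\frac12 t_kv^k)$ and invoking Lemma~\ref{lemma1}) shows $f_i(x^k)<f_i(\bar x)$ for large $k$; for $i\in I_2$ the Case~1 argument (Lemma~\ref{lemma2}) gives the same conclusion. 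Thus every index in $I_1\cup I_2$ is strictly decreased along $\{x^k\}$ for all large $k$.

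The quantitative heart of the proof is a rate estimate extracted from the Lipschitz-plus-limsup decomposition already used in Lemma~\ref{lemma2}. For any index $\ell$ with $f_\ell^\circ(\bar x,u)=0$, comparing $x^k$ against $\bar x+t_ku+\frac12t_k^2 v$ and $\bar x+t_ku$ yields
\[
\limsup_{k\to\infty}\frac{f_\ell(x^k)-f_\ell(\bar x)}{\frac12 t_k^2}\leqq f_\ell^\circ(\bar x,v)+f_\ell^{\circ\circ}(\bar x,u).
\]
Fixing $i^*\in I_2$ and setting $c:=-\big(f_{i^*}^\circ(\bar x,v)+f_{i^*}^{\circ\circ}(\bar x,u)\big)>0$, this estimate gives $f_{i^*}(\bar x)-f_{i^*}(x^k)\geqq \tfrac{c}{4}t_k^2$ for all large $k$. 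For $\ell\in I_3$ the right-hand side of the estimate is $0$, so along any subsequence on which $f_\ell(x^k)\geqq f_\ell(\bar x)$ one gets $\big(f_\ell(x^k)-f_\ell(\bar x)\big)/(\tfrac12 t_k^2)\to 0$, i.e.\ the increase is of order $o(t_k^2)$.

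Finally I would invoke local Geoffrion proper efficiency. Since $x^k\to\bar x$, for large $k$ we have $x^k\in U\cap Q_0$ and $f_{i^*}(\bar x)>f_{i^*}(x^k)$, so there is an index $j=j(k)$ with $f_{j(k)}(\bar x)<f_{j(k)}(x^k)$ and $\frac{f_{i^*}(x^k)-f_{i^*}(\bar x)}{f_{j(k)}(\bar x)-f_{j(k)}(x^k)}\leqq M$, which rearranges to $f_{i^*}(\bar x)-f_{i^*}(x^k)\leqq M\big(f_{j(k)}(x^k)-f_{j(k)}(\bar x)\big)$. Because every index of $I_1\cup I_2$ is strictly decreased, necessarily $j(k)\in I_3$; as $I_3$ is finite I pass to a subsequence on which $j(k)\equiv j_0$. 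Combining the lower bound $\tfrac{c}{4}t_k^2\leqq f_{i^*}(\bar x)-f_{i^*}(x^k)$ with $f_{i^*}(\bar x)-f_{i^*}(x^k)\leqq M\big(f_{j_0}(x^k)-f_{j_0}(\bar x)\big)=o(t_k^2)$ and dividing by $\tfrac12 t_k^2$ produces $\tfrac{c}{2}\leqq 0$, the desired contradiction. The main obstacle is precisely this matching of growth rates: one must certify that the $i^*$-decrease is genuinely of order $t_k^2$ while the compensating $j_0$-increase is of strictly smaller order, and the \emph{limsup} (rather than limit) nature of the generalized derivatives together with the $k$-dependence of the Geoffrion index $j(k)$ is what forces the careful subsequence extraction and one-sided-estimate bookkeeping described above.
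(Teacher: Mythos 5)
Your proof is correct and follows essentially the same route as the paper's: argue by contradiction, show $u$ is a critical direction, use the (WASRC) to produce the second-order tangent sequence $x^k=\bar x+t_ku+\tfrac12 t_k^2v^k\in Q_0$, establish strict decrease at the first-order-strict and second-order-strict indices, and then run a $t_k^2$-versus-$o(t_k^2)$ rate comparison that violates Geoffrion's trade-off bound. The only differences are bookkeeping: the paper fixes the distinguished index (its ``index $1$'', your $i^*$), passes to a constant set $\bar I$ of increasing indices (using Lemma~\ref{lemma2} to get the vanishing second-order term that you build into the definition of $I_3$) and shows the trade-off ratio tends to $+\infty$, whereas you pass to a constant Geoffrion index $j_0\in I_3$ and derive the numerical contradiction $c/2\leqq 0$ --- the substance is identical.
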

\begin{proof} Arguing by contradiction, assume that the system \eqref{equ:G1}--\eqref{equ:G3} admits a solution $(u, v)\in X\times X$. Without any loss of generality we may assume that
\begin{equation*}
F^2_1(\bar{x}; u, v)<_{\rm lex} (0, 0),
\end{equation*}
where $1\in I(\bar x; u)$. This implies that
\begin{equation}\label{equ:G4}
f_{1}^{\circ}(\bar x, v)+f_{1}^{\circ\circ}(\bar x, u)<0.
\end{equation}
From \eqref{equ:G1} and \eqref{equ:G3} it follows that $v\in L^2 (Q; \bar x, u)$ and
\begin{eqnarray*}
	f_{i}^{\circ}(\bar x, u)&\leqq 0, \ \ \ &i\in I,
	\\
	g_{j}^{\circ}(\bar x, u)&\leqq 0, \ \ \ &j\in J (\bar x).
\end{eqnarray*}
This and $1\in I(\bar x; u)$ imply that $u$ is a critical direction at $\bar x$. Since the \eqref{SACQ_3} holds at $\bar x$ for the critical direction $u$, we have $v\in T^2(Q_0; \bar x, u).$ Thus there exist a sequence  $\{v^k\}$ converging to $v$ and a positive sequence  $\{t_k\}$ converging to $0$ such that
$$x^k:=\bar x+t_ku+\frac12t_k^2v^k\in Q_0,\ \ \ \forall k\in\mathbb{N}.$$

Since $1\in I(\bar x; u)$ and \eqref{equ:G4}, as in the proof of Case 1 of Theorem \ref{nec_condition_weak_eff}, there exists $K_1\in \mathbb{N}$ such that
\begin{equation*}
f_1(x^k)<f_1(\bar x)
\end{equation*}
for all $k\geqq K_1$.

For each $i\in I\setminus I(\bar x; u)$, we have $f_{i}^{\circ}(\bar x, u)<0.$ As in the proof of Case 2 of Theorem \ref{nec_condition_weak_eff}, there exists $K_i\in \mathbb{N}$ such that
\begin{equation*}
f_i(x^k)<f_i(\bar x)
\end{equation*}
for all  $k\geqq K_i$.  Without any loss of generality we may assume that
\begin{equation*}
f_i(x^k)<f_i(\bar x)
\end{equation*}
for all $k\in \mathbb{N}$ and $i\in \{1\}\cup [I\setminus I(\bar x; u)]$. For each $k\in \mathbb{N}$, put
$$I_k:= \{i\in I(\bar x;u)\setminus\{1\} \ : f_i(x^k)>f_i(\bar x)\}.$$
We claim that $I_k$ is nonempty for all $k\in\mathbb{N}$. Indeed, if $I_k=\emptyset$ for some $k\in \mathbb{N}$, then we have
$$f_i(x^k)\leqq f_i(\bar x)\ \ \forall i\in  I(\bar x;u)\setminus\{1\}.$$
Using also the fact that  $f_i(x^k)<f_i(\bar x)$ for all $i\in \{1\}\cup [I\setminus I(\bar x; u)]$, we arrive at a contradiction with the efficiency of $\bar x$.

Since $I_k\subset I(\bar x;u)\setminus\{1\} $ for all $k\in\mathbb{N}$, without any loss of generality, we may assume  that $I_k=\bar I$ is constant for all   $k\in \mathbb{N}$. Thus, for each $i\in \bar I$, we have
\begin{equation*}
f_i(x^k)>f_i(\bar x), \ \ \forall k\in\mathbb{N}.
\end{equation*}
By Lemma \ref{lemma2}, we have
\begin{equation*}
f_i^{\circ}(\bar x, v)+f_i^{\circ\circ}(\bar x, u)\geqq 0,  \ \ i\in \bar I.
\end{equation*}
Since \eqref{equ:G1}, for each $i\in \bar I\subset I(\bar x;u)\setminus\{1\} $, we have
\begin{equation*}
f_i^{\circ}(\bar x, v)+f_i^{\circ\circ}(\bar x, u)\leqq 0.
\end{equation*}
Thus,
\begin{equation}\label{equ:G8}
f_i^{\circ}(\bar x, v)+f_i^{\circ\circ}(\bar x, u)=0, \ \ i\in \bar I.
\end{equation}
Let $\delta$ be a real number satisfying
\begin{equation*}
f_{1}^{\circ}(\bar x, v)+f_{1}^{\circ\circ}(\bar x, u)<\delta<0,
\end{equation*}
or, equivalently,
\begin{equation*}
-[f_{1}^{\circ}(\bar x, v)+f_{1}^{\circ\circ}(\bar x, u)]>-\delta>0.
\end{equation*}
It is easily seen that
$$\limsup_{k\to\infty} \dfrac{f_1(x^k)-f_1(\bar x)}{\frac12t^2_k}\leqq f_{1}^{\circ}(\bar x, v)+f_{1}^{\circ\circ}(\bar x, u).$$
Thus there exists $k_0\in\mathbb{N}$ such that
\begin{equation*}
f_1(\bar x)- f_1(x^k)>-\frac12\delta t^2_k>0
\end{equation*}
for all $k\geqq k_0$. Then, for any $i\in \bar I$ and $k\geqq k_0$, we have
\begin{equation*}
0< \dfrac{f_i(x^k)-f_i(\bar x)}{f_1(\bar x)- f_1(x^k)}\leqq \dfrac{f_i(x^k)-f_i(\bar x)}{-\frac12\delta t^2_k}.
\end{equation*}
From this and \eqref{equ:G8}, we have
\begin{align*}
0\leqq \lim_{k\to\infty}\dfrac{f_i(x^k)-f_i(\bar x)}{f_1(\bar x)- f_1(x^k)}&\leqq \limsup_{k\to\infty}\dfrac{f_i(x^k)-f_i(\bar x)}{-\frac12\delta t^2_k}
\\
&\leqq \limsup_{k\to\infty}\frac{f_i(x^k)-f_i(\bar x+t_ku)}{-\frac12\delta t^2_k}
\\
&+\limsup_{k\to\infty}\frac{f_i(\bar x+t_ku)-f_i(\bar x)-t_kf_i^\circ(\bar x; u)}{-\frac12\delta t^2_k}
\\
&\leqq-\frac{1}{\delta}[f_i^{\circ}(\bar x, v)+f_i^{\circ\circ}(\bar x, u)]
\\
&=0.
\end{align*}
Thus,
$$\lim_{k\to\infty}\dfrac{f_1(x^k)-f_1(\bar x)}{f_i(\bar x)-f_i(x^k)}=+\infty,$$
contrary to the fact that $\bar x$ is a local Geoffrion properly efficient solution of \eqref{problem}. The proof is complete.
\end{proof}

The following corollary is immediate from Theorem \ref{Geoffrion_necessary_I}.
\begin{corollary}\label{Geoffrion_necessary_II}  Let $\bar x\in Q_0$ be a local  Geoffrion properly efficient solution of \eqref{problem} and $u\in \mathcal{C}(\bar x)$. Suppose that the \eqref{SACQ_3} holds at $\bar x$ for the direction $u$. Then the system
	\begin{align*}
	&f_{i}^{\circ}(\bar x, v)+f_{i}^{\circ\circ}(\bar x, u)\leqq 0, \ \ i\in I(\bar x; u),
	\\
	&f_{i}^{\circ}(\bar x, v)+f_{i}^{\circ\circ}(\bar x, u)< 0, \ \ \mbox{at leats one} \ \ i\in I(\bar x; u),
	\\
	&g_{j}^{\circ}(\bar x, v)+g_{j}^{\circ\circ}(\bar x, u)\leqq 0, \ \ j\in J(\bar x, u),
	\end{align*}
	has no solution $v\in X$.
\end{corollary}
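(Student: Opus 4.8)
The plan is to argue by contradiction and to reduce the statement to the mechanism already established in the proof of Theorem~\ref{Geoffrion_necessary_I}. Assume there is a $v\in X$ satisfying the three displayed conditions of the corollary. The first step is to check that the pair $(u,v)$ then solves the lexicographic system \eqref{equ:G1}--\eqref{equ:G3} of Theorem~\ref{Geoffrion_necessary_I}; this is pure bookkeeping with the lexicographic order, once one uses that $u\in\mathcal{C}(\bar x)$.

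Concretely, since $u$ is critical we have $f_i^{\circ}(\bar x,u)\leqq 0$ for all $i\in I$, with equality exactly for $i\in I(\bar x;u)$, and $g_j^{\circ}(\bar x,u)\leqq 0$ for all $j\in J(\bar x)$, with equality exactly for $j\in J(\bar x;u)$. Hence, whenever the first coordinate of $F^2_i(\bar x;u,v)$ or of $G^2_j(\bar x;u,v)$ is strictly negative (that is, for $i\notin I(\bar x;u)$ or $j\notin J(\bar x;u)$) the inequality $\leqq_{\rm lex}(0,0)$ holds automatically, while for $i\in I(\bar x;u)$ and $j\in J(\bar x;u)$ the first coordinate vanishes and $\leqq_{\rm lex}(0,0)$ collapses to the second-coordinate inequalities $f_i^{\circ}(\bar x,v)+f_i^{\circ\circ}(\bar x,u)\leqq 0$ and $g_j^{\circ}(\bar x,v)+g_j^{\circ\circ}(\bar x,u)\leqq 0$ assumed in the corollary. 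This yields \eqref{equ:G1} and \eqref{equ:G3}, and the strict requirement for one $i\in I(\bar x;u)$ translates verbatim into \eqref{equ:G2}.

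It then remains to derive the contradiction, and here I would invoke the argument of the proof of Theorem~\ref{Geoffrion_necessary_I} rather than its statement. The one subtlety to flag is that the theorem is stated assuming \eqref{SACQ_3} at \emph{every} critical direction, whereas the corollary assumes it only at the single direction $u$. This is harmless: in that proof the regularity condition is used exactly once, to pass from $v\in L^2(Q;\bar x,u)$ to $v\in T^2(Q_0;\bar x,u)$ at the critical direction $u$ extracted from the solution. Since here $u$ is already critical and \eqref{SACQ_3} is assumed precisely at $u$, that single application is available. After it, the construction of a sequence $x^k:=\bar x+t_ku+\tfrac12 t_k^2 v^k\in Q_0$ with $v^k\to v$ and $t_k\downarrow 0$, together with Lemmas~\ref{lemma1} and \ref{lemma2} applied on the index blocks $\{1\}\cup[I\setminus I(\bar x;u)]$ and $\bar I$, reproduces the unbounded trade-off ratio $\frac{f_1(x^k)-f_1(\bar x)}{f_i(\bar x)-f_i(x^k)}\to+\infty$, contradicting the local Geoffrion proper efficiency of $\bar x$.

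Thus the main point is not computational difficulty but the observation that the all-directions regularity hypothesis of the theorem can be localized to the fixed direction $u$; the rest is just the order-theoretic dictionary between the lexicographic system of Theorem~\ref{Geoffrion_necessary_I} and the coordinatewise system of the corollary.
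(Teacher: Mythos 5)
Your proposal is correct and follows the paper's own route: the paper disposes of this corollary with the single line that it is ``immediate from Theorem \ref{Geoffrion_necessary_I}'', and your reduction (the lexicographic bookkeeping showing a solution $v$ of the coordinatewise system yields a solution $(u,v)$ of \eqref{equ:G1}--\eqref{equ:G3}, followed by the theorem's contradiction argument) is exactly that derivation spelled out. Your observation that the theorem's hypothesis (\ref{SACQ_3} at \emph{all} critical directions) must be localized to the single direction $u$ by appealing to the theorem's proof rather than its statement is accurate and is in fact a point the paper glosses over.
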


The next corollary shows that if the $(WARC)$ holds at $\bar x$, then every Geoffrion properly efficient solution of \eqref{problem} is also proper in the sense of Kuhn and Tucker \cite{Kuhn50}.
\begin{corollary}\label{first_order_nec_cond}  Let $\bar x\in Q_0$ be a local  Geoffrion properly efficient solution of \eqref{problem}. Suppose that the $(WARC)$ holds at $\bar x$. Then the  system
	\begin{align}
	&f_{i}^{\circ}(\bar x, u)\leqq 0, \ \ i\in I,\label{first1}
	\\
	&f_{i}^{\circ}(\bar x, u)< 0, \ \ \mbox{at leats one} \ \ i\in I,\label{first2}
	\\
	&g_{j}^{\circ}(\bar x, u)\leqq 0, \ \ j\in J(\bar x),\label{first3}
	\end{align}
	has no solution $u\in X$.
\end{corollary}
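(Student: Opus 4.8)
The plan is to argue by contradiction and to recognise \eqref{first1}--\eqref{first3} as the first-order trace of Theorem \ref{Geoffrion_necessary_I}. Indeed, taking the critical direction to be $0$ in that theorem we have $f_i^\circ(\bar x,0)=f_i^{\circ\circ}(\bar x,0)=0$, so $F_i^2(\bar x;0,u)=(0,f_i^\circ(\bar x,u))$ and $G_j^2(\bar x;0,u)=(0,g_j^\circ(\bar x,u))$, while $I(\bar x;0)=I$, the hypothesis \eqref{SACQ_3} at $0$ is precisely $(WARC)$, and $T^2(Q_0;\bar x,0)=T(Q_0;\bar x)$. Thus the whole scheme of Theorem \ref{Geoffrion_necessary_I} specialises, with Lemma \ref{lemma1} taking over the role played there by Lemma \ref{lemma2} (no genuine second-order term survives). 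I would therefore reproduce that scheme at first order, as follows.

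Suppose some $u\in X$ solves \eqref{first1}--\eqref{first3}; without loss of generality $f_1^\circ(\bar x,u)<0$. Then $u\in L(Q;\bar x)$, so $(WARC)$ gives $u\in T(Q_0;\bar x)$, and we may pick $t_k\downarrow 0$, $u^k\to u$ with $x^k:=\bar x+t_ku^k\in Q_0$. For every $i$ with $f_i^\circ(\bar x,u)<0$ (in particular $i=1$), Lemma \ref{lemma1} rules out $f_i(x^k)\geqq f_i(\bar x)$ along any subsequence, so $f_i(x^k)<f_i(\bar x)$ for all large $k$; passing to a subsequence, assume this for all $k$. Put $I^0:=I(\bar x;u)$ (note $1\notin I^0$) and
\[
I_k:=\{\, i\in I^0 \ :\ f_i(x^k)>f_i(\bar x)\,\}.
\]
If $I_k=\emptyset$ for some $k$, then $f(x^k)\leq f(\bar x)$ with strict inequality in the first coordinate, contradicting the local efficiency of $\bar x$ since $x^k\in Q_0$; hence $I_k\neq\emptyset$, and as $I_k$ lives in a finite set we may assume $I_k=\bar I$ is constant.

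For $i\in\bar I\subset I^0$ one has $f_i^\circ(\bar x,u)=0$ and $f_i(x^k)>f_i(\bar x)$; the Lipschitz estimate of Lemma \ref{lemma1} (absorbing the perturbation $u^k\to u$ into $L\|u^k-u\|$) yields $\limsup_k \tfrac{f_i(x^k)-f_i(\bar x)}{t_k}\leqq f_i^\circ(\bar x,u)=0$, whence $\tfrac{f_i(x^k)-f_i(\bar x)}{t_k}\to 0$. Writing $c:=f_1^\circ(\bar x,u)<0$, the same estimate gives $f_1(\bar x)-f_1(x^k)\geqq \tfrac{|c|}{2}\,t_k$ for all large $k$. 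Combining, for each $i\in\bar I$,
\[
0<\frac{f_i(x^k)-f_i(\bar x)}{f_1(\bar x)-f_1(x^k)}\leqq \frac{2}{|c|}\cdot\frac{f_i(x^k)-f_i(\bar x)}{t_k}\longrightarrow 0,
\]
so $\dfrac{f_1(\bar x)-f_1(x^k)}{f_i(x^k)-f_i(\bar x)}\to+\infty$ for every $i\in\bar I$. Since $\bar I$ is exactly the set of objectives worsened at $x^k$, no index can bound the trade-off ratio of objective $1$, contradicting the Geoffrion proper efficiency of $\bar x$.

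The delicate step is the last pair of asymptotics: one must pin $f_i(x^k)-f_i(\bar x)=o(t_k)$ for the worsened objectives while keeping $f_1(\bar x)-f_1(x^k)$ bounded below by a multiple of $t_k$. Both rest on the one-sided Clarke estimate of Lemma \ref{lemma1}, with the discrepancy $u^k-u$ controlled by local Lipschitzness exactly as in that lemma's proof; everything else is the bookkeeping of extracting the subsequence along which the worsened set $\bar I$ stabilises.
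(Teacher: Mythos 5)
Your proposal is correct and takes essentially the same approach as the paper: the paper's entire proof is your first paragraph, namely observing that $(WARC)$ is \eqref{SACQ_3} at the critical direction $0$, that $I(\bar x;0)=I$ and $J(\bar x;0)=J(\bar x)$, and then invoking Corollary \ref{Geoffrion_necessary_II} (equivalently Theorem \ref{Geoffrion_necessary_I}) with $u=0$. Your subsequent unrolled first-order argument—with Lemma \ref{lemma1} replacing Lemma \ref{lemma2}, the stabilized worsened set $\bar I$, and the trade-off ratio blowing up—is a faithful and correct specialization of the proof of Theorem \ref{Geoffrion_necessary_I}, so it adds self-containedness but no new idea.
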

\begin{proof} Since the $(WARC)$ holds at $\bar x$, the \eqref{SACQ_3} holds at $\bar x$ for the critical direction $0$. Clearly, $I(\bar x; 0)=I$ and $J(\bar x; 0)=J(\bar x)$. Thus, applying Corollary \ref{Geoffrion_necessary_II}, the system \eqref{first1}--\eqref{first3} has no solution $u\in X$.
\end{proof}
\begin{remark}{\rm Conditions \eqref{first1}--\eqref{first3} are  also known as strong first-order $KKT$  ($SFKKT$) necessary conditions in primal form. In \cite{Rizvi12}, Burachik et al. introduced a generalized Abadie regularity condition $(GARC)$ and established $SFKKT$   necessary conditions for Geoffrion properly efficient solutions of  differentiable vector optimization problems. Later on, Zhao \cite{Zhao15} proposed an extended generalized Abadie regularity condition $(EGARC)$ and then obtained $SFKKT$  necessary conditions for problems with locally Lipschitz data in terms of Clarke's directional derivatives.  Recall  that the  $(EGARC)$ holds at $\bar x\in Q_0$ if
		\begin{equation}\label{EGARC}
		L(Q; \bar x)\subset \bigcap_{i=1}^l T(M^i; \bar x),
		\end{equation}
		for all $i\in I$; see \cite[Definition 3.1]{Zhao15}. If $f_i$ and $g_j$ are of class $C^1(X)$, then condition \eqref{EGARC}  is called by the generalized Abadie regularity condition $(GARC)$; see \cite[p.483]{Rizvi12}. By the isotony of $T(\,\cdot\,; \bar x)$ and the fact that $M^i\subset Q_0$, we have
		$$T(M^i; \bar x)\subset T(Q_0; \bar x)\ \ \ \text{for all}\ \  i\in I.$$
		Thus the $(WARC)$ is weaker than the $(EGARC)$ $((GARC))$. The following example illustrates our results in which the condition $(WARC)$ is satisfied, but the condition $(EGARC)$ $((GARC))$ is not fulfilled.
It turns out that   Corollary~\ref{first_order_nec_cond} improves and
		extends results of Zhao \cite[Theorem 4.1]{Zhao15} and  Burachik et al. \cite[Theorem 4.3]{Rizvi12}.
	}
\end{remark}
\begin{example}{\rm  Consider the following problem:
		\begin{align*}
		& \text{min}\, f(x):=(f_1(x), f_2(x))
		\\
		&\text{subject to}\ \ x\in Q_0:=\{x\in\mathbb{R}^2\,|\, g(x)\leqq 0\},
		\end{align*}
		where
		$$f_1(x):=|x_1|+x_2^2, f_2(x):=-f_1(x), g(x):=x_2 \ \ \text{for all} \ \ x=(x_1, x_2)\in\mathbb{R}^2.$$
		Clearly, $\bar x=(0,0)$ is a Geoffrion properly efficient solution. The optimality conditions of Burachik et al. \cite[Theorem 4.3]{Rizvi12} cannot be used for this problem as the functions $f_1$ and $f_2$ are not differentiable at $\bar x$.
		
		For each $u=(u_1,u_2)\in\mathbb{R}^2$, we have
		$$f_1^{\circ}(\bar x,u)=|u_1|, f_2^{\circ}(\bar x,u)=-|u_1|, g^{\circ}(\bar x,u)=\langle\nabla g(\bar x), u\rangle=u_2.$$
		It is easy to check that
		$$\mathcal{C}(\bar x)=L(Q; \bar x)=\{(u_1, u_2)\in\mathbb{R}^2\,:\, u_1=0, u_2\leqq 0\}.$$
		We claim that the $(EGARC)$ does not hold at $\bar x$. Indeed, since
		\begin{align*}
		M^1&=\{(x_1, x_2)\in \mathbb{R}^2\,:\, f_1(x_1, x_2)\leqq 0, g(x_1, x_2)\leqq 0\}=\{\bar x\},
		\\
		M^2&=\{(x_1, x_2)\in \mathbb{R}^2\,:\, f_2(x_1, x_2)\leqq 0, g(x_1, x_2)\leqq 0\}=Q_0,
		\end{align*}
		we have $ T(M^1; \bar x)=\{\bar x\}$ and  $T(M^2; \bar x)=Q_0$. Thus, $T(Q_0; \bar x)=Q_0$ and
		$$\bigcap_{i=1}^2T(M^i; \bar x)=\{\bar x\}.$$
		Consequently,
		\begin{equation*}
		L(Q; \bar x) \nsubseteq \bigcap_{i=1}^2T(M^i; \bar x),
		\end{equation*}
		as required. This shows that the result of Zhao \cite[Theorem 4.1]{Zhao15}  cannot be applied for this example.
		
		Next we check the first-order necessary optimality conditions of our Corollary \ref{first_order_nec_cond}. Since $T(Q_0; \bar x)=Q_0$, we have
		$$L(Q; \bar x)\subset T(Q_0; \bar x).$$
		This means that the $(WARC)$ holds at $\bar x$. By Corollary \ref{first_order_nec_cond},  the system \eqref{first1}--\eqref{first3}
		has no solution $u\in \mathbb{R}^2$.
	}
\end{example}
\section{Concluding remarks}
\label{conclusions_sect}
In this paper we obtain primal second-order $KKT$   necessary conditions for vector optimization problems with inequality constraints in a nonsmooth setting using second-order upper generalized directional derivatives. We suppose that the objective functions and active constraints are only locally Lipschitz.  Some second-order constraint qualifications of Zangwill type, Abadie type and Mangasarian-Fromovitz type as well as a  regularity condition of Abadie type are proposed. They are applied in the optimality conditions. Our results improve and generalize the corresponding results of Aghezza et al. \cite[Theorem 3.3]{Aghezzaf99}, Gupta et al. \cite[Theorems 3.1 and 3.3]{Gupta2017}, Huy et al. \cite[Theorem 3.2]{Huy163},  Ivanov \cite[Theorem 4.1]{Ivanov15}, Constantin \cite[Theorem 2]{Elena}, Luu \cite[Corollary 5.2]{Luu17}  Zhao \cite[Theorem 4.1]{Zhao15}, and  Burachik et al. \cite[Theorem 4.3]{Rizvi12}.

To obtain  second-order $KKT$   necessary conditions in dual form, we need assume that the objective functions and constraint functions are of class $C^1(X)$. Then one can follow the scheme of the proof of \cite[Theorem 3.4]{Aghezzaf99} and we leave the details to the reader.
		
\section*{Acknowledgments}
The authors would like to thank the anonymous referee for his valuable remarks and detailed suggestions that allowed us to improve the original version. Y. B. Xiao is supported by the National Natural Science Foundation of China (11771067) and the Applied Basic Project of Sichuan Province (2019YJ0204). N. V. Tuyen is supported by  Vietnam National Foundation for Science and Technology Development (NAFOSTED) under grant number 101.01-2018.306 as well as the grant from School of Mathematical Sciences, University of Electronic Science and Technology of China, Chengdu, P.R. China.  C. F. Wen and J. C. Yao are supported by the Taiwan MOST [grant number 106-2115-M-037-001], [grant number 106-2923-E-039-001-MY3], respectively, as well as the grant from Research Center for Nonlinear Analysis and Optimization, Kaohsiung Medical University, Taiwan.	


\end{document}